\documentclass[10pt]{amsart}
\usepackage[utf8x]{inputenc}

\usepackage{dsfont}
\usepackage{amsmath}
\usepackage{amssymb}
\usepackage{graphicx}
\usepackage{amssymb}
\usepackage{amsfonts}
\usepackage{soul}
\usepackage{mathpazo}
\usepackage{color}
\usepackage{yfonts}
\usepackage{paralist}
\usepackage{stmaryrd}
\usepackage{amsxtra}
\usepackage{latexsym,mathrsfs}

\usepackage{mathabx}

\usepackage{epsfig, enumerate}
\usepackage{color}
\usepackage{hyperref}

\newtheorem{theorem}{Theorem}
\newtheorem{lemma}[theorem]{Lemma}
\newtheorem{coro}[theorem]{Corollary}

\newtheorem{conj}[theorem]{Conjecture}

\newtheorem{prop}[theorem]{Proposition}
\newtheorem{Remark}[theorem]{Remark}

\numberwithin{theorem}{section}
\numberwithin{equation}{section}

\title[Tur\'an Number and feedback vertex numbers]{On Tur\'an Number of Graphs with Small Minimum Feedback Vertex Numbers}

\author{Xiao-Chuan Liu}
\address[Liu]{Departamento de Matemática,
 Universidade Federal de Pernambuco,
	Avenida Jornalista Aníbal Fernandes, Cidade Universitária, Recife, Brasil}
\email{xiaochuan.liu@ufpe.br}

\author{Xu Yang}
\address[Yang]{Instituto de Computação, Universidade Federal de Alagoas,
	Av. Lourival Melo Mota, S/N, Maceió, Brasil} 
\email{yang@ic.ufal.br}

\begin{document}
\maketitle{}
\begin{abstract} 
Given a graph $H$, the minimum feedback vertex number of $H$ is the minimum number of vertices whose removal results in an acyclic graph. In this paper, we investigate Turán-type extremal problems for bipartite graphs in terms of their feedback vertex number. Our first result concerns bipartite graphs $H$ with minimum feedback vertex number one. Such graphs can be 
obtained from a forest by identifying a specified collection of leaves into a single vertex.
For these graphs, we show that $\text{ex}(n, H)$ is upper bounded by $O(n^{1+1/k^\ast})$,  
where $2k^\ast$ is the length of the shortest cycle contained in 
$H$.  

In addition, we consider a family of bipartite graphs with minimum feedback vertex number three. Let $E_{k,t}$ be the graph obtained from the theta graph $\theta_{k,t}$ by joining a new vertex $x$ to one side of the bipartition and another vertex $y$ to the other. Let $E^+_{k,t}$ denote the graph obtained by adding the edge $xy$ to $E_{k,t}$. We prove that for any $k\geq 2$ and sufficiently large $t$, 
$\text{ex}(n, E^+_{k,t})= \Theta(n^{\frac{3k-1}{2k-1}}).$

\end{abstract}

\section{Introduction}
\label{sec:introduction}
The {\it $\text{extremal number}$}, or  {\it Tur\'an number}, of a graph $H$, denoted by $\text{ex}(n, H)$, is defined as
the maximum number of edges in an $H$-free graph on $n$ vertices. The celebrated Erd\H os-Stone-Simonovits theorem relates
$\text{ex}(n,H)$ 
to the chromatic number $\chi(H)$, and 
states that 
\begin{align}
	\text{ex}(n, H) = \bigg(1-\frac{1}{\chi(H)-1}+o(1)\bigg)\binom{n}{2}.
\end{align}    
When $H$ is bipartite, the Erd\H os-Stone-Simonovits theorem only provides a crude order of magnitude and does not yield accurate estimates. 
A classic example is the family of even cycles $C_{2k}$, which are among the most extensively studied bipartite graphs in extremal graph theory.  
It has long been known that 
$\text{ex}(n,C_{2k})=O(n^{1+1/k})$ for all $k\geq 2$, 
although
matching lower bounds are only known for $k=2,3,5$ (see~\cite{wenger1991extremal} by Wenger).
Other interesting and somewhat related bipartite graphs have also been studied intensively. 
Researchers have studied the following theta graphs. 
The {\it theta graph} $\theta_{k,\ell}$, for $k,\ell\geq 2$, 
consists of two vertices connected by $\ell$ internally disjoint paths of length $k$. 
Faudree and Simonovits~\cite{Faudree1983} proved that $\text{ex}(n,\theta_{k,\ell})=O(n^{1+1/k})$.
The corresponding lower bounds were later obtained by Conlon~\cite{Conlon2019graphs} 
for all $\ell$ sufficiently large, depending on $k$.  

Very recently, Liu and Yang~\cite{doi:10.1137/21M1408439} studied the
\textit{generalized theta graph} $\Theta_{k_1,\dots,k_\ell}$, which
consists of two vertices joined by $\ell$ internally disjoint paths of
lengths $k_1,\dots,k_\ell$, where all the $k_i$ have the same parity.
Define
\[
k^\ast=\frac12\min_{1\leq i<j\leq \ell}(k_i+k_j),
\]
so that $2k^\ast$ is the length of the shortest cycle in the graph.
They proved that
\[
\operatorname{ex}(n,\Theta_{k_1,\dots,k_\ell})
=
O(n^{1+1/k^\ast}).
\]
Thus, for generalized theta graphs, the extremal upper-bound exponent
is governed by the length of the shortest cycle.

In this paper, we extend this phenomenon from generalized theta graphs to all connected bipartite graphs with minimum feedback vertex number one. Recall that a \textit{feedback vertex set} of a graph is a set of vertices whose removal results in an acyclic graph, and the \textit{minimum feedback vertex number} (FVN) is the minimum size of such a set. Connected graphs with minimum FVN equal to $1$ can be obtained by identifying all the leaves of a suitable forest to a single vertex.

This represents a substantial increase in structural complexity. Generalized theta graphs consist of internally disjoint paths joining two fixed vertices, whereas a general graph in our class may contain extensive branching and need not admit any comparable path representation. Thus, Theorem~\ref{main} extends a result for a rigid family described by path lengths to an entire structural class of bipartite graphs.

\begin{theorem}\label{main} 
Let $H$ be a connected bipartite graph with  minimum feedback vertex number $1$. 
Suppose $2k^\ast$ is the length of the shortest cycle contained in $H$.
Then $\text{ex}(n, H)=O(n^{1+1/k^\ast})$. 
\end{theorem}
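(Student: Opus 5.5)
Throughout, write $H$ as a tree-like structure around a single vertex. Let $v$ be a vertex with $H-v$ a forest $F$, and let $N=N_H(v)$. Since $H$ is bipartite, every cycle of $H$ passes through $v$. Every such cycle is formed by two neighbours $a,b\in N$ lying in the same component of $F$, together with the $a$--$b$ path in $F$. So $2k^\ast=\min(d_F(a,b)+2)$ over pairs $a\neq b$ in $N$ lying in a common tree of $F$. The plan is to imitate the strategy behind $\operatorname{ex}(n,\Theta_{k_1,\dots,k_\ell})=O(n^{1+1/k^\ast})$.

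\textbf{Reduction to a regular bipartite host.} Suppose $G$ has $n$ vertices and $e(G)=Cn^{1+1/k^\ast}$ with $C$ large. By standard regularization (Erd\H{o}s--Simonovits / Jiang--Seiver), pass to a bipartite subgraph that is $K$-almost-regular. This subgraph has minimum degree $\delta\ge c\,C m^{1/k^\ast}$ on $m$ vertices. Then fix a root $x$, to be the image of $v$, and grow a breadth-first layering $L_0=\{x\},L_1,\dots,L_{k^\ast}$.

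\textbf{Expansion at short depth.} The key lemma I would aim for is the following dichotomy. Either the layers expand,
\[
|L_{i+1}|\ge \tfrac{\delta}{C'}\,|L_i| \quad\text{for } i<k^\ast,
\]
or some pair of consecutive layers contains a subgraph of large average degree. The expanding case contradicts $|L_{k^\ast}|\le m$, since $\delta^{k^\ast}\gg m$ by the choice of $C$. So some bipartite graph $G[L_i,L_{i+1}]$, with $i<k^\ast$, contains a subgraph of large average degree. This lemma is exactly Faudree--Simonovits / Liu--Yang style. A large-average-degree subgraph between consecutive layers then lets us embed any fixed forest with rich flexibility: we can embed a tree while forcing chosen vertices into chosen layers. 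It is inside this dense layer pair that we will embed $F$ and attach $v=x$.

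\textbf{Embedding $F$ with leaves back to $x$.} Let $h=k^\ast-1$. Because $d_F(a,b)\ge 2h$ for all $a,b\in N$ in a common component, we can do the following in each tree $T$ of $F$. Root $T$ at a suitable vertex $r$ and choose heights so that every vertex of $N\cap T$ sits at depth-level $1$, i.e.\ is mapped into $L_1$ (a neighbour of $x$). Here the distance condition guarantees the required layer assignments are parity-consistent and never require moving below level $1$. To realize it, we use the branching structure of the BFS tree: each vertex in $L_j$ has a back-path to $x$, and $H$-vertices in $N$ can be routed into $L_1$ along disjoint back-paths. Concretely, I would map a ``core'' of $T$ into the dense pair $(L_i,L_{i+1})$ by a greedy tree embedding in a subgraph of huge minimum degree. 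Then I would extend each branch toward $N$ downward through the layers, using that the BFS tree from a vertex of $L_j$ reaches $L_1$ after $j-1$ steps. Disjointness is obtained by the usual ``a few bad vertices'' argument: since degrees are huge compared to $|V(H)|$, we can always avoid previously used vertices, provided we first clean the layers so that each vertex of $L_{j}$ has many children in $L_{j+1}$ and many parents in $L_{j-1}$.

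\textbf{Main obstacle.} The hardest step is making the down-routes to $x$ vertex-disjoint. Two different branches can collapse onto the same ancestor in the BFS tree, and then the paths are not disjoint. This is precisely the difficulty Faudree--Simonovits handle for theta graphs, and it gets worse here because $F$ branches arbitrarily. My first attempt would be to replace the BFS tree by a layered digraph. In it, every vertex of $L_j$ has at least $\delta/K'$ back-neighbours in $L_{j-1}$, which holds after a pruning step justified by the dichotomy lemma. Then I would embed $F$ level by level from the top, choosing each new image among many back-neighbours while avoiding the $O(|V(H)|)$ vertices already used. If back-degrees cannot be guaranteed, I would instead use the theta-graph result as a black box on an auxiliary structure: take a long path decomposition of $F$ into branches, and note that $\Theta$-type configurations with arbitrarily many paths exist. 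This yields the needed disjoint families, which are then glued along $F$.
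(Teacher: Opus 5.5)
There is a genuine gap, and it sits exactly at the step you call the main obstacle. Neither of your proposed fixes works. Your regularization and the expansion-versus-dense-pair dichotomy are standard and fine.

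Routing the attachment vertices of $F$ down to $x$ along internally disjoint layered paths of prescribed length is not a matter of avoiding $O(v(H))$ used vertices. The obstruction is \emph{forced coalescence} of descending paths. Already for $H=C_{2k}$, a vertex of $L_i$ may have a single parent. Two vertices of the dense pair may then have every descending path pass through a common ancestor. This is why Bondy--Simonovits need the $A$--$B$ path lemma with ancestor colourings, and why the Faudree--Simonovits argument for $\theta_{k,t}$ is so involved.

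Your pruning claim, that every vertex of $L_j$ can be given at least $\delta/K'$ parents, is false. Vertices of $L_1$ have exactly one parent. More to the point, let $i$ be the first index where expansion fails. For $j\le i$, almost-regularity gives $e(L_{j-1},L_j)=O(\delta|L_{j-1}|)$, while expansion gives $|L_j|\ge(\delta/C')|L_{j-1}|$. So the average back-degree below the dense pair is $O(C')$. Even inside the dense pair, degrees are only guaranteed to be of order $C'$, not $\delta$, so the dichotomy controls only one layer pair.

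The fallback fails too. The bound $\operatorname{ex}(n,\theta_{k,t})=O(n^{1+1/k})$ only yields some copy somewhere in $G$. It does not give disjoint paths from prescribed, already-embedded vertices to the fixed vertex $x$. Independently found copies cannot be glued without losing control of their intersections.

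What is missing is a mechanism that certifies many internally disjoint root paths at exactly the vertices where you attach. The paper replaces expansion by Bukh--Tait-style path counting.

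\begin{itemize}
\item It maintains a family of layered paths from $r$ in which every intermediate pair $x\in L_i$, $y\in L_j$ carries at most $\mathcal R_{j-i-1}(K)$ subpaths.
\item Because $\sum_{j=1}^{k-1}\mathcal R_{j-1}(K)\mathcal R_{k-j-1}(K)=\mathcal R_{k-1}(K)/K$, any terminal vertex carrying more than $\mathcal R_{k-1}(K)/2$ paths has more than $K/2$ internally disjoint ones.
\item After cleaning so that active vertices have at least $K/2$ active children, $H$ is embedded tree by tree via the decomposition of Proposition~\ref{structure_of_F}.
\item In that decomposition each new tree meets the previous ones in at most one vertex other than $u$. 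So each step needs at most one fresh root path, which can be chosen among the $K/2$ disjoint ones.
\item $H$-freeness then bounds the bad vertices and the paths reaching them, via the residual-family decomposition. This leaves $\Omega(d^{k^\ast})$ root paths into $L_{k^\ast}$ with $O(1)$ per endpoint, hence $d=O(n^{1/k^\ast})$.
\end{itemize}

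Without an analogue of this mechanism, or a genuine generalization of the Faudree--Simonovits lemma to all graphs with feedback vertex number one, your outline does not produce the embedding.
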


We remark that matching lower bounds are generally the main difficulty in this
setting. Two sharp constructions illustrate the situation at exponent $5/4$.
Verstraete and Williford~\cite{verstraete2019graphs} constructed
$\theta_{4,3}$-free graphs with $\Omega(n^{5/4})$ edges, where
$\theta_{4,3}$ denotes the graph consisting of three internally disjoint paths
of length four with the same endpoints. This graph is the most immediate theta
extension of $C_8$, obtained by adding a third path of length four between the
same pair of endpoints. Together with the corresponding upper bound, their
construction yields
\begin{equation*}
    \operatorname{ex}(n,\theta_{4,3})=\Theta(n^{5/4}).
\end{equation*}
Liu and Yang~\cite{doi:10.1137/21M1408439} considered the generalized theta
graph $\Theta_{3,5,5}$, consisting of three internally disjoint paths of
lengths $3$, $5$, and $5$ with the same endpoints. This is a natural
non-uniform generalized theta graph whose shortest cycles have length eight.
They provided a matching lower-bound construction and proved that
\begin{equation*}
    \operatorname{ex}(n,\Theta_{3,5,5})=\Theta(n^{5/4}).
\end{equation*}
Thus, the exponent determined by the shortest cycle is sharp not only for the
uniform theta graph $\theta_{4,3}$, but also for a genuinely non-uniform
generalized theta graph.

Theorem~\ref{main} extends this shortest-cycle upper-bound principle from
generalized theta graphs to the entire class of bipartite graphs with minimum
feedback vertex number one. It therefore gives rise to many further natural
families for which matching lower-bound constructions remain unknown. One of
the simplest examples beyond generalized theta graphs is the graph $W_t$
obtained from $t$ copies of $C_8$ by identifying one vertex from each copy
into a single common vertex. At first sight, determining the Tur\'an number of
$W_t$ may appear to be a relaxation of the long-standing conjecture
\begin{equation*}
    \operatorname{ex}(n,C_8)=\Theta(n^{5/4}).
\end{equation*}
However, for every fixed $t$, determining the correct order of magnitude of
$\operatorname{ex}(n,W_t)$ is equivalent, up to a constant factor depending
only on $t$, to determining the correct order of magnitude of
$\operatorname{ex}(n,C_8)$. We prove this in
Proposition~\ref{prop:C8_bouquet} in Appendix~A.

We now consider a family of graphs whose minimum FVN is equal to three
whenever $k\geq 3$. Let $E_{k,t}$ be constructed as follows. Start with
a copy of the theta graph $\theta_{k,t}$, viewed as a bipartite graph
with bipartition $A\cup B$. Introduce two new vertices, $x$ and $y$,
and join $x$ to every vertex in $A$ and $y$ to every vertex in $B$.
The resulting graph is denoted by $E_{k,t}$. Furthermore, let
$E^+_{k,t}$ be obtained from $E_{k,t}$ by adding the edge $xy$.

For $k\geq 3$, the graph $E^+_{k,t}$ has minimum FVN equal to three.
Indeed, deleting $x$, $y$, and one endpoint of the original theta graph
results in an acyclic graph, whereas no set of two vertices meets every
cycle. When $k=2$, however, we have
$E^+_{2,t}=K_{3,t+1}$, whose minimum FVN is two, since deleting two
vertices from the part of size three leaves a star.

Our second main result establishes matching upper and lower bounds,
up to a constant factor, for the Turan number of $E^+_{k,t}$.

\begin{theorem}\label{new_exponents}
For any \( k \geq 2 \) and \( t \geq t_0(k) \), where \( t_0(k) \) is a sufficiently large constant depending on \( k \), we have:
\begin{equation}
\mathrm{ex}(n, E^+_{k,t}) = \Theta\left(n^{\frac{3k-1}{2k-1}}\right).
\end{equation}
\end{theorem}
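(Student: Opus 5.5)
Both bounds use the fact that $E^+_{k,t}$ consists of the theta graph $\theta_{k,t}$ on $A\cup B$ together with an edge $xy$, where $x$ is joined to all of $A$ and $y$ to all of $B$. Hence a copy of $E^+_{k,t}$ in a host graph $G$ is an edge $xy$ together with a copy of $\theta_{k,t}$ whose two colour classes lie in $N(x)$ and $N(y)$ respectively. This points to a link-graph argument. Write $\alpha=\frac{3k-1}{2k-1}$ and note that $\alpha-1=\frac{k}{2k-1}$.

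\textbf{Upper bound.} First pass to a subgraph $G$ that is bipartite and almost regular (Erd\H{o}s--Simonovits style) with $m=cn^{\alpha}$ edges and minimum degree $\delta \gtrsim n^{\alpha-1}=n^{k/(2k-1)}$. For each edge $xy$ of $G$, let $G_{xy}$ be the bipartite graph between $N(y)$ (a subset of $x$'s side) and $N(x)$ (a subset of $y$'s side). Each edge $uv$ of $G_{xy}$ with $u\in N(y)$, $v\in N(x)$ closes the $4$-cycle $xyuv$, so counting over all edges gives
\[
\sum_{xy} e(G_{xy}) = \#\{\text{pairs of edges forming a } C_4\}.
\]
If $G$ is $E^+_{k,t}$-free, then every $G_{xy}$ is $\theta_{k,t}$-free, because a theta in $G_{xy}$ together with $x,y$ forms $E^+_{k,t}$ (vertices of $N(x)$ go in the class joined to $x$). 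By Faudree--Simonovits, $e(G_{xy})\le C\,v_{xy}^{1+1/k}$ with $v_{xy}=d(x)+d(y)\approx 2\delta$. Summing,
\[
\#C_4\text{-incidences} \le C m\,\delta^{1+1/k}.
\]
For the lower estimate, counting $C_4$'s through edges via paths of length three gives at least about $m\,\delta^{2}\cdot(\delta^2/n)$, using the standard bound of roughly $n^2 \delta^4/n^{2}$ on the number of $C_4$'s when $\delta^2\gg n$. Comparing the two yields $\delta^{3}/n \lesssim \delta^{1+1/k}$, i.e.\ $\delta^{2-1/k}\lesssim n$, i.e.\ $\delta\lesssim n^{k/(2k-1)}$, which is exactly the threshold $\alpha-1$. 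So I would run the argument with the constant $c$ large to obtain a contradiction.

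The care needed is that $C_4$ counts must be taken per edge, not globally, and the link sizes must be kept comparable. I would first use almost-regularity (Jiang--Seiver) so that every $v_{xy}=\Theta(\delta)$. I would then count, for each edge $xy$, the edges between $N(x)$ and $N(y)$, and average using convexity: $\sum_{xy} e(N(x),N(y))$ equals the number of $4$-cycles times a constant. The number of $4$-cycles is at least $\sim(\sum_{u,v}\mathrm{codeg}(u,v)^2)\ge n^2(\delta^2/n)^2$ by Cauchy--Schwarz.

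\textbf{Lower bound.} This is the main obstacle. We need an $E^+_{k,t}$-free graph with $n^{\alpha}$ edges, so every link graph must be $\theta_{k,t}$-free while $C_4$'s are abundant. I would try a random algebraic construction in the spirit of Bukh and Conlon's, which Conlon used for theta graphs. Take $\mathbb{F}_q$-points with parts $\mathbb{F}_q^{a}$ and $\mathbb{F}_q^{b}$, and random polynomial adjacency. The dimensions are chosen so that $n\approx q^{2k-1}$ and degrees are $q^{k}$, giving the exponent $\alpha$. The crucial property is that $E^+_{k,t}$ is ``balanced'' in the Bukh--Conlon sense, with density $(e-?)/(v-2)$ matching $\alpha$, since $e(E^+_{k,t})=tk+(t(k-1)+2)+1$ over $v=t(k-1)+4$. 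The proof would then follow their variety argument: the copies of $E^+_{k,t}$ extending a fixed rooted configuration lie on a bounded-complexity variety, so the count is either bounded or at least of order $q$. The remaining step is to delete one vertex from each bounded-many family. The needed balancedness check for large $t$ is what forces $t\ge t_0(k)$.
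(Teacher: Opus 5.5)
Your upper bound is correct and is the paper's argument in averaged form. The paper fixes a root $r$, cleans $G[L_1\cup L_2]$, and exhibits a single edge $rr'$ whose link between $N(r')$ and $N(r)$ has $\Omega(\delta^3/n)=\omega(\delta^{1+1/k})$ edges on $O(\delta)$ vertices. You find such an edge by averaging $\sum_{xy}e(G_{xy})=4\#C_4\gtrsim\delta^4$ over $m\approx n\delta$ edges, which is if anything cleaner. Two small repairs are needed. First, the lower count is $m\cdot\delta\cdot(\delta^2/n)\approx\delta^4$, not $m\delta^2(\delta^2/n)$. Second, the link should be taken on $N(y)\setminus\{x\}$ and $N(x)\setminus\{y\}$, since otherwise a theta in $G_{xy}$ could use $x$ or $y$; with this definition the identity with $4\#C_4$ is exact.

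The lower bound has a genuine gap: you root at the wrong vertices. Your denominator $v-2$ means the roots are only $x,y$. But the Bukh--Conlon method forbids \emph{rooted blow-ups}. For fixed images of the roots, the number of extensions of a balanced rooted graph is either at most a constant $C$ or of order $q$. After deleting the rare bad root configurations, one gets a graph with no $p$ distinct copies sharing the same root images, for $p>C$. Over $\{x,y\}$, the graph $E^+_{k,t}$ is a single rooted copy, so this says nothing about $E^+_{k,t}$ itself; a $p$-fold blow-up of it is a supergraph, which is useless for a lower bound. Moreover, its rooted density $\frac{t(2k-1)+2}{t(k-1)+2}$ is strictly below $\frac{2k-1}{k-1}$ for every $t$, so no value of ``$?$'' produces $\alpha$.

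The missing idea is to root also at the poles $v_0,v_k$ of $\theta_{k,t}$. Let $F_{k,1}$ be the path $v_0v_1\cdots v_k$ with root set $\{v_0,v_k,x,y\}$, where each internal $v_i$ is joined to $x$ or $y$ according to its side of the bipartition. It has $k-1$ unrooted vertices and $2k-1$ edges, so $\rho=\frac{2k-1}{k-1}$ and $2-1/\rho=\alpha$. It is balanced, because any $s$ internal vertices meet at least $3s-(s-1)=2s+1$ edges. Its rooted $t$-blow-up $F_{k,t}$ lies in $E_{k,t}\subseteq E^+_{k,t}$, since every edge of $E^+_{k,t}$ missing from it joins two roots. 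You then need Bukh--Conlon's bound for balanced rooted graphs; the tree version is not enough, since $F_{k,1}$ has $4$-cycles through $x$ or $y$ once $k\ge4$. Run with exactly your parameters ($n=q^{2k-1}$, $k-1$ polynomials, degrees about $q^k$), it gives $\mathrm{ex}(n,E^+_{k,t})\ge\mathrm{ex}(n,F_{k,t})=\Omega(n^{\alpha})$. Finally, $t\ge t_0(k)$ is needed so that $t$ exceeds the constant $C$ in the dichotomy. It is not a balancedness issue: balancedness concerns $F_{k,1}$ alone and does not involve $t$.
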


We remark that \( 3/2 \) is a limit point of all the Turán exponents in the above theorem as \( k \to \infty \). Although the theorem is stated under the assumption that \( t \geq t_0(k) \) for some sufficiently large constant \( t_0(k) \), the upper bound it provides actually holds for all \( k \geq 2 \) and \( t \geq 2 \), as will be shown in Proposition~\ref{upper_generalized_cube}. When \( k = 2 \) and \( t = 2 \), the graph \( E_{2,2} \), also seen as $K_{3,3}$ minus one edge, 
is known to have Turán number \( \Theta(n^{3/2}) \), as shown in~\cite{furedi2001ramsey}. 
On the other hand, the graph $E_{2,3}$ contains the complete bipartite graph $K_{3,3}$ as a subgraph, and therefore, by Proposition~\ref{upper_generalized_cube}, 
we have $\text{ex}(n,E_{2,3})=\Theta(n^{5/3})$, as expected. 
When \( k = 3 \), we have \( E_{3,2} = Q_8 \), the 3-dimensional cube graph. In this case, our theorem recovers the upper bound \(\mathrm{ex}(n, Q_8) = O(n^{8/5})\), which is the best known to date. Determining the exact Turán exponent for this graph remains one of the major open problems in extremal graph theory, as the current best lower bound is \(\Omega(n^{3/2})\).
Following Theorem~\ref{new_exponents}
(cf. Proposition~\ref{upper_generalized_cube}), 
it is natural to ask: what is the least $t\geq 2$ such that $\text{ex}(n, E^+_{3,t})=\Theta(n^{8/5})$? 
Clearly, similar questions can be posed for each value of $k$. 

Section~2 is devoted to the proof of Theorem~\ref{main}. We argue by contradiction: assuming that the host graph $G$ has sufficiently many edges but contains no copy of $H$, we construct an embedding of $H$ into $G$. The main difficulty is that a general connected bipartite graph with minimum feedback vertex number one may have substantial branching and need not admit the rigid path structure of a theta graph or a generalized theta graph.

To overcome this difficulty, we introduce a structural decomposition of $H$ into a sequence of trees that can be embedded incrementally. We then develop a flexible path-family embedding lemma that allows the embedding process to be carried out inside a prescribed family of layered paths. Our argument builds on the path-counting method of Bukh and Tait~\cite{Bukh_theta}. We note, however, that the proof of their key bad-set estimate contains a gap: the path families appearing in the decomposition need not be disjoint, so the asserted counting identity does not follow. We repair this issue by introducing a residual-family decomposition, which assigns each path according to the last layer at which an irregular pair occurs. Together, the structural decomposition, the path-family embedding lemma, and the residual-family argument yield a rigorous proof of the required bad-set estimate and allow us to extend the method from theta-type graphs to all connected bipartite graphs with minimum feedback vertex number one.

The proof of Theorem~\ref{new_exponents} is presented in Section 3. The upper bound is again established via an embedding scheme, while the lower bound follows from Bukh’s random polynomial method. 

\section{Proof of Theorem~\ref{main}}\label{sec:proof}
We begin by introducing three preliminary lemmas, all of which are standard. Note that these lemmas will also be used in the next section.
\subsection{Preliminary Lemmas}
We write $G=(V(G),E(G))$ to denote that a graph $G$ has vertex set $V(G)$ and edge set $E(G)$. We use $v(G)$ and $e(G)$ to denote the number of
vertices and edges of $G$, respectively. 
\begin{lemma}\label{embedding_tree} 
	Let $T$ be a tree on $p$ vertices, and let $G$ be
	a bipartite graph in which every vertex has degree at least 
	$p+1$. Then $T$ can be embedded in $G$. 
	Moreover, for any prescribed vertex $w\in V(T)$ and any vertex $u\in V(G)$, 
	the embedding can be chosen so that $w$ is mapped to $u$. 
\end{lemma}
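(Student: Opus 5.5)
We embed $T$ greedily, one vertex at a time, in an order that keeps the already-embedded part connected. Root $T$ at the prescribed vertex $w$ and list its vertices $w=w_1,w_2,\dots,w_p$ so that every prefix $\{w_1,\dots,w_i\}$ induces a subtree; breadth-first or depth-first order from $w$ works. Then each $w_i$ with $i\ge 2$ has exactly one neighbour among $w_1,\dots,w_{i-1}$, namely its parent $w_{j(i)}$.

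We build an injective map $\varphi$ by induction on $i$. Put $\varphi(w_1)=u$, which is allowed for any $u\in V(G)$. Suppose $\varphi$ is already defined and injective on $w_1,\dots,w_{i-1}$, and sends tree edges to edges of $G$. The parent image $\varphi(w_{j(i)})$ has at least $p+1$ neighbours in $G$. At most $i-1\le p-1$ vertices of $G$ are already used as images. So some neighbour $v$ of $\varphi(w_{j(i)})$ is unused, and we set $\varphi(w_i)=v$.

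The new map is still injective, and the only new edge of $T$, namely $w_{j(i)}w_i$, goes to the edge $\varphi(w_{j(i)})v$ of $G$. After $p$ steps $\varphi$ is an embedding of $T$ into $G$ with $\varphi(w)=u$.

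No step is a real obstacle. The only point to check is that choosing a vertex order in which each new vertex has exactly one earlier neighbour means the greedy step needs only the degree of one vertex; this is the defining property of trees. Bipartiteness of $G$ is not needed, and the argument in fact works with minimum degree $p-1$, so the stated hypothesis of $p+1$ leaves slack.
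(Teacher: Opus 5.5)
Your proof is correct and is the same greedy argument the paper sketches: you map $w$ to $u$ first, then embed the remaining vertices one at a time along an ordering in which each new vertex has exactly one already-embedded neighbour, whose image still has an unused neighbour. Your side remarks are also right: minimum degree $p-1$ suffices and bipartiteness of $G$ is never used, so the stated hypothesis has slack.
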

\begin{proof} The proof is elementary. One simply embeds the vertices of $T$ into $G$ one by one in a greedy manner. 
	To establish the final claim, we choose $w$ as the first vertex to embed and map it into $u$, then proceed with the greedy embedding process. 
\end{proof}

A graph $G$ is called \textit{$\Delta$-almost regular}
if every vertex $v$ has degree in some interval of the form
$[d, \Delta d]$.
A bipartite graph with bipartition $X\cup Y$ is called
\textit{balanced}  if $1/2\le |X|/|Y|\le 2$.
The following useful lemma will allow us to assume that our host graph is balanced and $\Delta$-almost regular.

\begin{lemma}[Lemma 2.3 of~\cite{conlon2021extremal}]\label{Kalmost_regular}
For any positive constant $\alpha<1$, there exists $n_0$ such that if $n\geq n_0$, $C\geq 1$ and $G$ is an $n$-vertex graph with at least $Cn^{1+\alpha}$ edges, then $G$ has a $\Delta$-almost regular balanced bipartite subgraph $G'$ with $m$ vertices such that $m\geq n^{\frac{\alpha(1-\alpha)}{2(1+\alpha)}}$, $|E(G')|\geq \frac{C}{10}m^{1+\alpha}$ and $\Delta=60\cdot 2^{1+1/\alpha^2}$.
\end{lemma}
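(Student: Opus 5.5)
The lemma is cited from Conlon--Janzer--Lee, so I would follow the standard Erd\H{o}s--Simonovits regularization argument with a cleanup step. The target is a subgraph that is bipartite, balanced, almost regular, and still has density of order $m^{\alpha}$. The plan has three stages: first pass to a subgraph that is dense relative to its own size and has large minimum degree; then pass to a bipartite subgraph; finally trim to almost-regularity and balance, checking that no stage loses too much.

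\emph{Step 1 (Erd\H{o}s--Simonovits).} Define $K$-almost regular subgraphs via the function $f(H)=e(H)/v(H)^{1+\alpha}$. The classical theorem of Erd\H{o}s and Simonovits says that a graph with $e(G)\ge Cn^{1+\alpha}$ contains a subgraph $G_1$ on $m_1\ge n^{\frac{\alpha(1-\alpha)}{1+\alpha}}$ vertices with $e(G_1)\ge \frac{2C}{5}m_1^{1+\alpha}$ and maximum degree at most $K\cdot e(G_1)/m_1$, where $K$ depends only on $\alpha$. The proof has two parts. First, repeatedly halve the vertex set, keeping the denser half: sort vertices by degree and restrict to the high-degree half when that half carries most of the edges. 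Second, use a potential argument to show that this process stops while the vertex count is still a power of $n$. Stopping means the high-degree vertices do not dominate, which bounds the maximum degree by a constant times the average degree. The key estimate is that each halving multiplies $f$ by roughly $2^{\alpha}$ at the cost of a factor $(1-\epsilon)$. Since $f\le n^{1-\alpha}$, the number of halvings is bounded, which yields the lower bound on $m_1$. In addition, delete vertices of degree below $\frac{1}{4}$ of the average; this keeps half the edges and gives minimum degree comparable to maximum degree up to the constant $K$.

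\emph{Step 2 (bipartite and balanced).} Take a random bipartition of $G_1$, or a max-cut, to get a bipartite subgraph with at least half the edges. Then delete low-degree vertices again so the degree ratio stays bounded, with a slightly worse constant. For balance, note that if one side $X$ is much larger than $Y$, the degree sum identity $\sum_{X}\deg=\sum_Y\deg$ together with almost-regularity ($\deg\in[d,Kd]$) forces $|X|/|Y|\le K$. This is not yet within $[1/2,2]$. To fix it, randomly split $X$ into about $|X|/|Y|$ parts of size about $|Y|$ and keep a part $X'$ whose induced degrees remain concentrated. In expectation the degrees drop by a constant factor, which is bounded in terms of $K$. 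Clean up once more by deleting vertices whose degree fell too low; the standard iterative deletion removes only a small fraction of the edges.

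\emph{Step 3 (bookkeeping).} Track the constants. The density factor $C/10$ should absorb the losses of $2/5$ from Step 1, $1/2$ from the bipartition, and a constant from cleanup. The exponent $m\ge n^{\frac{\alpha(1-\alpha)}{2(1+\alpha)}}$ leaves a square-root slack over Step 1's bound, which is enough room to absorb polynomial losses in the vertex count. The value of $\Delta$ comes from iterating these losses, with $2^{1/\alpha^2}$ arising from the number of halving rounds. I expect the main obstacle to be the balancing step: subsampling $X$ must preserve degrees of $Y$-vertices uniformly. That requires concentration, which holds because degrees are at least $m^{\alpha}$, a power of $m$, so Chernoff bounds with a union bound over all vertices apply. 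Matching the exact constant $60\cdot 2^{1+1/\alpha^2}$ is routine but tedious.
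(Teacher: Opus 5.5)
The paper gives no proof of this lemma. It quotes it from Conlon--Janzer--Lee, so your proposal has to stand on its own, explicit constants included.

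\textbf{The gap: your route does not give the stated constants.} Your architecture (Jiang--Seiver regularization, then a balanced bipartite subgraph) does yield a lemma of this shape with \emph{some} constants depending only on $\alpha$. It does not yield $C/10$ and $60\cdot 2^{1+1/\alpha^2}$, and your Step~3 bookkeeping is wrong.

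\emph{Edge density.} By your own accounting the density is already down to $C/10$ before any balancing: $\frac25$ from Step~1, $\frac12$ from your extra low-degree deletion, $\frac12$ from the max-cut. Splitting $X$ into $r\approx|X|/|Y|\ge 2$ parts then divides the edge count by about $r$. The vertex count only falls from about $(r+1)|Y|$ to about $2|Y|$. So $e/m^{1+\alpha}$ is multiplied by roughly $(r+1)^{1+\alpha}/(r\,2^{1+\alpha})$, which is below $1$ for small $\alpha$. The final cleanup costs yet another constant.

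\emph{Degree ratio.} Write $K'=20\cdot 2^{1+1/\alpha^2}$ for the Jiang--Seiver constant. Splitting leaves $X$-degrees intact but divides $Y$-degrees by $r$, and $r$ can be of order $K'$. Before cleanup the ratio is therefore of order $K'^2$. A cleanup relative to average degrees only restores some multiple $cK'$, with no evident way to get $c\le 3$. The target is $60\cdot 2^{1+1/\alpha^2}=3K'$, so the claim that matching the constant is ``routine'' is not justified for this route.

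\textbf{The fix: use a random equipartition instead of a max-cut.} Balance then costs nothing. You mention random bipartitions but do not use the fact that their parts are automatically balanced.
\begin{enumerate}
\item Take the Jiang--Seiver subgraph $G_1$ as it is. It is already $K'$-almost regular, so no extra deletion is needed. It has $m_1\ge n^{\alpha(1-\alpha)/(1+\alpha)}$ vertices, $e(G_1)\ge\frac{2C}{5}m_1^{1+\alpha}$, and $\delta(G_1)\ge 2e(G_1)/(K'm_1)=\Omega(m_1^{\alpha})$.
\item Split $V(G_1)$ uniformly at random into parts of sizes $\lfloor m_1/2\rfloor$ and $\lceil m_1/2\rceil$.
\item By hypergeometric Chernoff bounds and a union bound over the $m_1$ vertices, for $n\ge n_0(\alpha)$ there is, with positive probability, a split in which every vertex keeps between $\frac14$ and $\frac34$ of its degree across the cut.
\item The resulting bipartite $G'$ is balanced. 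All its degrees lie in $[\delta(G_1)/4,\,3\Delta(G_1)/4]$, so it is $3K'$-almost regular.
\item Its edge count satisfies $e(G')\ge\frac12\sum_v\deg_{G_1}(v)/4=e(G_1)/4\ge\frac{C}{10}m_1^{1+\alpha}$, with $m=m_1$ comfortably above $n^{\alpha(1-\alpha)/(2(1+\alpha))}$.
\end{enumerate}
These are exactly the constants in the statement, and the balancing obstacle you anticipated disappears. For the paper's applications any $\alpha$-dependent constants would suffice, so your route would serve there. It does not, however, prove the lemma as stated.
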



The following lemma ensures that we can extract a subgraph of sufficiently high density where the minimum degree of each vertex on both sides of the bipartition is controlled. The proof is folklore.

\begin{lemma}\label{reduction}
Let $G$ be a bipartite graph with the vertex bipartition $X\cup Y$. Let  $d_1=e(G)/|X|$ and $d_2=e(G)/|Y|$ denote the average degrees of vertices in $X$ and $Y$, respectively. Then, by deleting at most half of the edges, we can obtain a subgraph $G'$ in which every vertex $x\in X$ has degree at least $d_1/4$, and every vertex $y\in Y$ has degree at least $d_2/4$.
\end{lemma}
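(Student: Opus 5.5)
Proof proposal. We use the standard iterative deletion argument. Put $m=e(G)$. As long as the current graph contains a vertex $x\in X$ of degree less than $d_1/4$, or a vertex $y\in Y$ of degree less than $d_2/4$, delete that vertex together with all edges incident to it. Each deleted vertex of $X$ removes fewer than $d_1/4=m/(4|X|)$ edges, and at most $|X|$ such vertices are ever deleted, so deletions on the $X$ side remove fewer than $m/4$ edges in total. By the same count, deletions on the $Y$ side remove fewer than $|Y|\cdot d_2/4=m/4$ edges. Hence fewer than $m/2$ edges are removed overall.

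The process must stop, because the graph is finite. Let $G'$ be the graph remaining when it stops. By the bound above, $G'$ has more than $m/2>0$ edges, so it is nonempty and at most half of the edges were deleted. When the process stops, no vertex violates the threshold for its side. So every surviving vertex $x\in X$ has degree at least $d_1/4$, and every surviving $y\in Y$ has degree at least $d_2/4$.

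The thresholds $d_1$ and $d_2$ are defined from the original graph $G$, not recomputed as vertices are removed. This is what keeps the edge-loss accounting valid.

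The statement speaks of the vertices of $X$ and $Y$. If it is meant to include vertices that were deleted, we read $G'$ as having bipartition $(X\cap V(G'))\cup(Y\cap V(G'))$, and the degree conditions are claimed only for the vertices that remain. This is the usual convention, and it is the only delicate point here; there is no real obstacle.
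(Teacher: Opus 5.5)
Your proof is correct: it is the standard iterative low-degree deletion argument that the paper cites as folklore without writing out. Your accounting is sound: the thresholds are fixed in advance and each vertex is deleted at most once, so fewer than $|X|d_1/4+|Y|d_2/4=e(G)/2$ edges are lost. Your reading that the degree bounds apply only to surviving vertices is also the right one, since a vertex whose original degree is below $d_1/4$ (resp.\ $d_2/4$) could never meet the bound by deleting edges alone.
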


\subsection{FVN 1 Graph Decomposition}
\label{subsec:key_embedding}
Consider a graph $H$ whose minimum FVN is equal to $1$. In this subsection we 
provide a decomposition of $H$, which will be used to greedily embed
 the entire graph. Informally speaking, it gives a convenient way to ``grow'' the graph $H$ starting from the special vertex $u$, whose removal makes the graph acyclic.

Before stating the proposition, we introduce some notation.  
First, suppose $u\in H$ is a designated vertex whose removal makes the graph $H$ acyclic. Fix an integer $2\leq k\leq k^\ast$. Define the subgraph $S_{k-1} \subseteq H$ to be the subgraph induced by all vertices at distance at most $k-1$ from $u$.
Note that $S_{k-1}$ is a tree. Let $F_{k-1}$ denote the set of vertices at distance exactly $k-1$ from $u$,  and define $S_{k-1}^-=S_{k-1}\setminus F_{k-1}$. Let $T$ be the union of the components of $H\backslash\{u\}$ that do not intersect $F_{k-1}$. Note that $T$ can be empty. 
Let $$T_{\mathrm{rest}}=H[V(T)\cup \{u\}].$$ 
Since $H$ is connected, $T_{\mathrm{rest}}$ is connected. Moreover, each component of $T$ is joined to $u$ by exactly one edge; otherwise, two such edges together with the unique path between their other endpoints would form a cycle of length less than $2k^\ast$. Therefore, $T_{\mathrm{rest}}$ is a tree, possibly the trivial tree consisting only of $u$.

Define $R_{k-1}:=H \backslash S_{k-1}$, where $H \backslash S_{k-1}$ denotes the graph obtained from $H$ by deleting all vertices of $S_{k-1}$ (together with their incident edges). 
For each vertex $w\in F_{k-1}$, let $P_{wu}$ denote the unique shortest path from $w$ to $u$ in $H$.
See Figure~\ref{figure} for an illustration of the notation.

\begin{figure}[htb]
	\centering
	\includegraphics[width=0.6\textwidth]{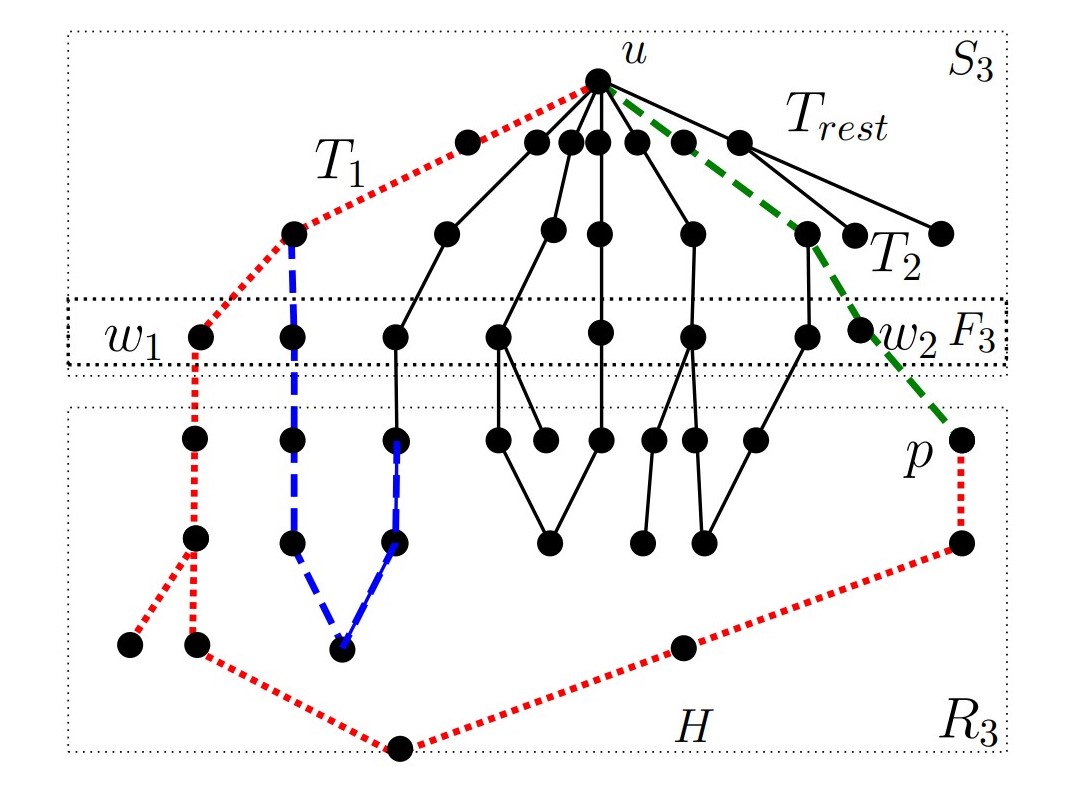}
	\caption{An example of the notation and the proof of Proposition~\ref{structure_of_F}. 
The red dotted tree is $T_1$. 
The blue dashed tree illustrates
one possible choice for the second tree.
The green dashed tree illustrates an alternative choice for the second tree, denoted by $T_2$.}\label{figure}
\end{figure}

\begin{prop}\label{structure_of_F} 
For each fixed $2\leq k \leq k^\ast$, let $F_{k-1}$ and $T_{\mathrm{rest}}$ be defined as above. We can order the vertex set $F_{k-1}$ as $F_{k-1}=\{w_1,\dots,w_\ell\}$ and obtain the following decomposition
\begin{equation}
H=\left(\bigcup_{i=1}^{\ell}T_i\right)\cup T_{\mathrm{rest}},
\qquad
\text{where $T_1,\ldots,T_\ell$ and $T_{\mathrm{rest}}$ are trees,}
\end{equation}
such that the following properties hold: 
\begin{enumerate}[(a)]
\item For each $i=1,\dots,\ell$, the vertex $w_i$ belongs to $V(T_i)$ and to no other $V(T_j)$.
\item For every $t<\ell$, the graphs
$T_{t+1}\setminus\{u\}$ and
\(H_t:=\bigcup_{i=1}^{t}T_i\)
intersect in at most one vertex.
\end{enumerate}
\end{prop}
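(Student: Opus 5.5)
The plan is to build the trees one vertex $w_i$ at a time, taking $T_i$ to be (essentially) the geodesic $P_{w_iu}$, stopped early if it runs into what has already been built, and then to distribute the edges of $H$ not yet covered as pendant subtrees. Two facts are used throughout. First, $H\setminus\{u\}$ is a forest. Second, $H$ has no cycle of length less than $2k^\ast$, and $k-1<k^\ast$. Hence $S_{k-1}$ is a tree, each $P_{w u}$ ($w\in F_{k-1}$) is unique, and any two such paths, once they meet, coincide from that point on down to $u$. Moreover, every internal vertex of $P_{wu}$ lies at distance $<k-1$ from $u$, so no $P_{wu}$ contains a vertex of $F_{k-1}$ other than $w$.

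\emph{Step 1 (paths).} Order $F_{k-1}$ as $w_1,\dots,w_\ell$ so that vertices lying in the same component $C$ of $H\setminus\{u\}$ are consecutive. Within $C$ (a tree), use a BFS-type order, so that each $w_{t+1}$ is as close as possible in $C$ to the part already built. Put $T_1':=P_{w_1u}$. For $t\ge1$, walk along $P_{w_{t+1}u}$ from $w_{t+1}$ towards $u$, and let $T_{t+1}'$ be the initial segment up to and including the first vertex of $V(H_t')\setminus\{u\}$ it meets. If no such vertex exists, let $T_{t+1}'$ be the whole path. By the coalescence property above, the rest of $P_{w_{t+1}u}$ after that first common vertex already lies in $H_t'$. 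Thus $T'_{t+1}\setminus\{u\}$ meets $H'_t$ in at most one vertex, every $T'_i$ is a path, and property (a) holds for the $T'_i$ by the distance remark.

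\emph{Step 2 (leftover edges).} Let $L$ be the set of edges of $H$ that lie in no $T_i'$ and not in $T_{\mathrm{rest}}$; these are edges inside the components $C$ meeting $F_{k-1}$, together with edges from $u$ into such $C$. Consider the components $D$ of the graph formed by $L$ after removing the vertices of $\bigcup_i T'_i$, each taken together with its attaching edges. I will show each $D$ touches $\bigcup_i T_i'\cup\{u\}$ in exactly one vertex. Then $D$ is glued onto the smallest-index $T_i'$ containing that vertex, or onto the one with the largest index if that vertex is $u$. Gluing a pendant tree at a single vertex keeps each $T_i$ a tree. It does not change $V(T_i)\cap F_{k-1}$, provided $D$ contains no vertex of $F_{k-1}$, which holds because all of $F_{k-1}$ is already used. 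It also preserves (b), since new vertices are private to the tree receiving them.

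\emph{Main obstacle.} The hard part is the claim in Step 2 that a leftover piece $D$ cannot attach at two vertices. Inside $C$ this follows because $C$ is a tree. The delicate case is when $D$ carries an edge $uv$ and also meets some path $P_{w_iu}$ at a vertex $a\neq u$. Then $uv$, the path in $D$ from $v$ to $a$, and the segment of $P_{w_iu}$ from $a$ back to $u$ form a cycle, and I must contradict the girth hypothesis. I would first try to bound this cycle below $2k^\ast$ by using that $a$ is at distance $<k-1$ from $u$ and that $D$ avoids $F_{k-1}$. If $D$ wanders far from $u$, this needs an additional argument, namely that $C$, being a component of a forest containing vertices of $F_{k-1}$, has only one edge to $u$ unless that edge lies on some $P_{w_iu}$. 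The cases where this fails are exactly the configurations that must be reassigned by changing the order in Step 1, so I expect to fine-tune the ordering there.
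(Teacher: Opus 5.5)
\textbf{There is a genuine gap, and it breaks the strategy itself.} The union $\bigcup_i T_i'$ from Step~1 is exactly $\bigcup_{w\in F_{k-1}}P_{wu}$. This is a subtree of $S_{k-1}$, and it is the same for every ordering. If each leftover piece $D$ is then glued at a single vertex, $\bigcup_i T_i$ is still a tree.

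But $\bigcup_i T_i$ must contain every cycle of $H$. Indeed, each component of $T_{\mathrm{rest}}\setminus\{u\}$ hangs from $u$ by one edge, and every cycle lies in $\{u\}\cup C$ for a component $C$ of $H\setminus\{u\}$ that meets $F_{k-1}$. Since $H$ has a cycle, your Step~2 claim that each $D$ touches $\bigcup_i T_i'\cup\{u\}$ in exactly one vertex fails for every admissible $H$. Two concrete cases:
\begin{itemize}
\item For $H=C_4=uazb$ and $k=2$, you get $T_1'=ua$ and $T_2'=ub$, and $D=\{z\}$ attaches at both $a$ and $b$.
\item In $C_{2k^\ast}$, the piece containing the vertex antipodal to $u$ attaches at both vertices of $F_{k-1}$.
\end{itemize}

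You have also put the difficulty in the wrong place. A piece with an attaching edge $uv$ cannot pass distance $k-1$ without meeting $F_{k-1}$. So it lies in $S^-_{k-1}$, and it attaches by a single edge because $S_{k-1}$ is a tree; the ``delicate case'' you describe does not occur. The pieces that actually break your claim lie in $R_{k-1}$, and these can attach to several vertices of $F_{k-1}$ on different geodesics. The fact that $C$ is a tree does not help, because $(\bigcup_i T_i'\setminus\{u\})\cap C$ is in general disconnected: geodesics through different neighbours of $u$ meet only at $u$. Reordering in Step~1 cannot help either, since the union of the $T_i'$ does not depend on the order.

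\textbf{What is missing is to use the slack in (b).} The tree $T_{t+1}$ may meet $H_t$ in $u$ together with one further vertex, and this is how cycles arise, at most one per step. The paper grows the trees by adjacency rather than along geodesics:
\begin{itemize}
\item It picks $w_{t+1}$ with a neighbour $p$ in $H_t\setminus\{u\}$. This $p$ is unique, since $H\setminus\{u\}$ is a forest and $H_t\setminus\{u\}$ stays connected.
\item It takes $T_{t+1}^-$ to be the whole component of $(H\setminus H_t)\setminus(F_{k-1}\setminus\{w_{t+1}\})$ containing $w_{t+1}$. Thus each piece of $R_{k-1}$ is absorbed into the first tree that reaches it.
\item It adds the edge $pw_{t+1}$. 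If $p\in R_{k-1}$, it also adds the last edge of $P_{w_{t+1}u}$. That path meets $H_t$ only in $u$, so exactly one cycle through $u$ is closed.
\end{itemize}
To check that $T_{t+1}^-$ has no other edge to $u$, use that every cycle of $H$ contains at least two vertices of $F_{k-1}$. This holds because the ball of radius $k^\ast-1$ about $u$ induces a tree.
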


\noindent\textbf{Remark.}
Let us sketch an outline of the proof. For convenience, we first assume that $H\backslash{u}$ is connected. The general case follows from the same construction, with only routine modifications needed to keep track of the different components of $H\backslash{u}$. Suppose that $F_{k-1}$ contains $\ell$ vertices. We decompose $H$ into the trees
$T_1,\ldots,T_\ell$ and $T_{\mathrm{rest}}$, where
$T_{\mathrm{rest}}$ may be the trivial tree consisting only of $u$,
such that each vertex in $F_{k-1}$ belongs to exactly one of
$T_1,\ldots,T_\ell$. The ordering of the vertices of $F_{k-1}$ corresponds to the order in which these trees are constructed.

We begin by selecting the first tree $T_1$; see the proof for the precise construction. Let $w_1$ be the unique vertex in $V(T_1)\cap F_{k-1}$. We then proceed iteratively: at each step, we select a new tree and identify a new vertex $w\in F_{k-1}$, continuing until all $\ell$ trees have been constructed. Moreover, each new tree is chosen so that its intersection with the union of the previously selected trees consists either of a single vertex, which may or may not be $u$, or of two vertices, one of which is $u$.

In other words, each subsequent tree may grow from a vertex different from $u$ in the existing structure, without intersecting $u$. When this is not possible, it may instead grow from $u$. Consequently, under the assumption that $H\backslash{u}$ is connected, the union of the previously selected trees, with $u$ removed, remains connected throughout the process. At each step, the new tree contains a previously uncovered vertex of $F_{k-1}$. This decomposition is motivated by the embedding step, in which we embed $H$ into $G$ by embedding the trees $T_1,\ldots,T_\ell$ first and then
$T_{\mathrm{rest}}$.

\begin{proof}[Proof of Proposition~\ref{structure_of_F}]
As mentioned earlier, we assume that $H$ has at least one cycle and that $H\backslash\{u\}$ is connected. In the general case, the same argument is repeated for the relevant components; the components not intersecting $F_{k-1}$, together with the vertex $u$, form $T_{\mathrm{rest}}$ as defined above.

We begin by selecting an arbitrary  vertex from $F_{k-1}$ and denote it by $w_1$. Let $T^-_1$ be the unique connected component of 
$$(H\backslash  \{u\} ) \backslash (F_{k-1}\backslash \{w_1\})$$ 
that contains $w_1$. 
Define $T_1$ to be the union of $T_1^-$ with the unique edge joining $T^-_1$ to the vertex $u$.
Let $H_1=T_1$. 
In Figure~\ref{figure}, the subgraph with red dotted edges illustrates an example of $T_1$.

Since $k\leq k^*$, we have $|F_{k-1}|\geq 2$. Since $H\backslash\{u\}$ is connected, the two sets
\[
V(H_1)\backslash\{u\}
\quad\text{and}\quad
F_{k-1}\backslash\{w_1\}
\]
are joined by at least one edge in $H\backslash\{u\}$. Choose such an edge and denote it by $pw_2$, where $p\in V(H_1)\backslash\{u\}$ and $w_2\in F_{k-1}\backslash\{w_1\}$. Note that $w_2$ has no other neighbor in $T_1^-$; otherwise, two distinct edges from $w_2$ to $T_1^-$, together with the path between their endpoints inside the tree $T_1^-$, would form a cycle in $H\backslash\{u\}$. We define $T_2^-$ to be the connected component of $$(H\backslash H_1) \backslash (F_{k-1}\backslash\{w_2\})$$ which contains $w_2$. Now there are two cases: 
\begin{enumerate}
\item 
If $p\in S_{k-1}^-\backslash\{u\}$, then $T_2^-$ is connected to $T_1$ only through the edge $pw_2$. In this case, we define $T_2$ as the union of $T_2^-$ and the edge $pw_2$. 
This case is illustrated by the blue dashed tree in Figure~\ref{figure}. 
\item If $p\in R_{k-1}$, 
let $P_{w_2u}$ be the unique $(k-1)$-path 
connecting $w_2$ to $u$. This path  intersects $H_1$ only at $u$.
Note that $T_2^-$ already contains $P_{w_2u}\backslash \{u\}$. 
In this case, $T_2$ is defined as the union of $T_2^-$ and two additional edges:   the edge $pw_2$ and the terminal edge of $P_{w_2u}$ incident to $u$. 
This case is illustrated by the green dashed tree in Figure~\ref{figure}; in the figure, this tree is denoted by $T_2$.
\end{enumerate}

In either of the two cases above,
we have obtained the tree $T_2$. Let $H_2=T_1\cup T_2$. By construction,  both conditions (a) and (b) are satisfied so far.

Now, proceed inductively. Suppose we have already located 
$w_1,\dots,w_t\in F_{k-1}$ 
and defined $H_t=\bigcup_{i=1}^t T_i$ for some $t<\ell$ satisfying conditions (a) and (b). 
Choose a vertex from $F_{k-1}\backslash\{w_1,\dots,w_t\}$ that has a neighbor in $H_t\setminus\{u\}$, and call it $w_{t+1}$. Then let $p$ be the unique vertex in $H_t\setminus \{u\}$ adjacent to $w_{t+1}$.
Define $T_{t+1}^-$ to be the connected component of $$(H\backslash H_t) \backslash (F_{k-1} \backslash \{w_{t+1}\})$$ that contains $w_{t+1}$. 
We have two cases: 
\begin{enumerate}
\item If $p\in S_{k-1}^- \backslash \{u\}$, define $T_{t+1}$ to be the union of $T_{t+1}^-$ and the edge $pw_{t+1}$.

\item If $p\in R_{k-1}$, then we have a 
unique $(k-1)$-path $P_{w_{t+1}u}$ connecting $w_{t+1}$ to $u$, and it intersects $H_t$ only at $u$.
Note that $T_{t+1}^-$ contains $P_{w_{t+1}u}\backslash \{u\}$. 
In this case, define 
$T_{t+1}$ as the union of 
$T_{t+1}^-$ and two edges:  the edge $pw_{t+1}$, and the terminal edge of $P_{w_{t+1}u}$ incident to $u$.

\end{enumerate}

For condition (a), each $w_i$ belongs to $V(T_i)$ and to no other $V(T_j)$, for $1\leq i,j\leq t+1$ with $i\neq j$.
For condition $(b)$, suppose $T_{t+1}$ intersects $H_{t}\backslash\{u\}$ at two vertices $v_1$ and $v_2$.
Then we can show that $(H_{t}\cup T_{t+1})\setminus \{u\}$ contains a cycle. Indeed, $H_{t}\setminus \{u\}$ is connected, so it contains a path between $v_1$ and $v_2$. Similarly,  $T_{t+1}\setminus \{u\}$ contains  another path between $v_1$ and $v_2$. Thus, there are two distinct paths between $v_1$ and $v_2$, forming a cycle. 

The above process eventually terminates when $T_\ell$ has been constructed and all vertices of $F_{k-1}$ have been covered. The vertices not covered by $T_1,\ldots,T_\ell$, together with the vertex $u$, form the graph $T_{\mathrm{rest}}$ defined above.
\end{proof}

\subsection{Key Embedding Proposition}
Hereafter, we suppose that $G$ is a bipartite graph with a designated root $\{r\}=L_0$. We define layer $L_i$ to be 
the set of vertices at distance $i$ from $r$; that is,  $V(G)=\{r\}\cup(\bigcup\{L_i\})$.
For any $U\subseteq L_i$ and $V\subseteq L_j$ with $i<j$, we  
denote by $\mathcal P(U,V)$ the set
of paths of the form $v_iv_{i+1}\cdots v_j$, where each $v_\ell\in L_\ell$ for $\ell=i,i+1, \dots, j$.
When one of the sets-- say $U$-- contains only a single vertex $u$, we write $\mathcal{P}(u, V)$ instead of $\mathcal{P}(\{u\}, V)$. For any vertex $v\in L_i$, we say that the neighbors of $v$ in $L_{i+1}$ are the {\it children} of $v$, and the neighbors of $v$ in $L_{i-1}$ are the {\it parents} of $v$. 

For any positive number $K$, we define
\begin{equation}
	\mathcal R_m(K) = \frac{K^m}{m+1} \binom{2m}{m} = K^m C_m, \quad \text{for all } m \geq 0,
\end{equation}
where $C_m = \frac{1}{m+1} \binom{2m}{m}$ is the $m$-th Catalan number. The Catalan numbers can also be defined recursively by setting $C_0 = 1$ and
\begin{equation}\label{catalan_recurrence}
	C_{m}=\sum_{i=1}^{m} C_{i-1}C_{m-i}, \text{ for all } m\geq 1.
\end{equation}
Catalan numbers arise in numerous combinatorial contexts. 
In our argument, however, their role is only to provide a convenient increasing sequence of constants satisfying the recurrence above. 
Indeed, we restrict ourselves to the above formula; for further details, see~\cite{stanley2015catalan}.

We now present the key embedding proposition, which gives sufficient
conditions, in terms of path counts between layers, for
embedding $H$. The proposition extends the embedding argument of
Lemma~4.2 of Bukh and Tait~\cite{Bukh_theta} to an arbitrary prescribed
family of \emph{layered paths}. Its proof combines this extension with
the structural decomposition established in
Proposition~\ref{structure_of_F}.

By a \emph{layered path} with respect to the layers
$L_0,L_1,\ldots,L_k$, we mean a path
$v_i v_{i+1}\cdots v_j$, for some $0\leq i<j\leq k$, such that
$v_t\in L_t$ for every $t=i,i+1,\ldots,j$.

\begin{prop}\label{Key_Embedding}
Let $G$ be a bipartite graph with maximum degree at most $\Delta d$.
Choose a root vertex $r\in V(G)$, and let
\[
L_0=\{r\},L_1,\ldots,L_k
\]
be the corresponding distance layers, where $2\leq k\leq k^\ast$.
Let $K\geq 4v(H)$ be a sufficiently large constant.

Let $\mathcal F$ be a nonempty family of layered paths of the form
\[
v_0(=r)v_1\cdots v_k,
\qquad v_i\in L_i.
\]
Let $A\subseteq L_1$ be the set of vertices occurring as the first vertex after the root in the paths of $\mathcal F$, and let $B\subseteq L_k$ be the set of their
terminal vertices.

For a path $P=v_0v_1\cdots v_k\in\mathcal F$ and integers
$0\leq i<j\leq k$, write
\[
P[i,j]:=v_iv_{i+1}\cdots v_j.
\]
For $x\in L_i$ and $y\in L_j$, define the path family
\begin{equation}\label{descending_family}
\mathcal F(x,y):=
\{P[i,j]: P=v_0v_1\cdots v_k\in\mathcal F,\ v_i=x,\ v_j=y\}.
\end{equation}

Assume that the following conditions hold:
\begin{enumerate}
    \item\label{PF_Path_Bound} Path bound between intermediate layers. For every $x\in L_i$ and $y\in L_j$, with
    \[
    0\leq i<j\leq k
    \qquad\text{and}\qquad
    (i,j)\neq(0,k),
    \]
    we have
    \[
    |\mathcal F(x,y)|
    \leq
    \mathcal R_{j-i-1}(K).
    \]

    \item\label{PF_First_Density} Large number of paths from $L_1$ to $L_k$ per vertex in $L_1$. 
    \[
    \frac{|\mathcal F|}{|A|}
    >
    Kk(\Delta d)^{k-2}.
    \]

\item\label{PF_Terminal_Density} Large number of paths from $L_1$ to $L_k$ per vertex in $L_k$. 
    \[
    \frac{|\mathcal F|}{|B|}
    >
    \mathcal R_{k-1}(K).
    \]
\end{enumerate}
Then $H$ can be embedded into the support graph $G_{\mathcal F}$ of
$\mathcal F$, and hence into $G$. Here, $G_{\mathcal F}$ is the
subgraph of $G$ consisting of all vertices and edges occurring in
members of $\mathcal F$.
\end{prop}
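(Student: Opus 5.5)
The plan is to embed $H$ with the special vertex $u$ sent to a vertex $b\in B\subseteq L_k$, and with every $w\in F_{k-1}$ sent to a vertex of $A\subseteq L_1$. The path $P_{wu}$ then becomes the reversed segment $P[1,k]$ of some member of $\mathcal F$ ending at $b$; note this segment has exactly $k-1$ edges, matching the length of $P_{wu}$. The trees $T_1,\dots,T_\ell$ of Proposition~\ref{structure_of_F} are embedded one at a time in this order, and $T_{\mathrm{rest}}$ is embedded last by the greedy argument of Lemma~\ref{embedding_tree}. Property (b) of Proposition~\ref{structure_of_F} is what makes an incremental embedding possible. When $T_{t+1}$ is embedded, it is attached to the already embedded $H_t$ at no more than one vertex besides $u$. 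So each new tree is grown from a single prescribed anchor, possibly together with a prescribed terminal path back to $b$. The only requirement is that it avoids the boundedly many vertices already used, of which there are at most $v(H)\le K/4$.

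The first step is a cleaning procedure on $\mathcal F$, in the spirit of Lemma~4.2 of Bukh and Tait. Call a pair (vertex $x\in L_i$, child or parent $y$) \emph{irregular} if the number of paths of $\mathcal F$ through the edge $xy$ is small compared with the average through $x$. Concretely, a path is bad at layer $i$ if the number of its continuations from $v_i$ that pass through $v_{i+1}$ is below a $1/K$-fraction of what condition~(\ref{PF_First_Density}) or~(\ref{PF_Path_Bound}) predicts. The bad paths are assigned to residual families according to the \emph{last} layer at which an irregular pair occurs, which makes these families disjoint. Summing over layers, the bound $|\mathcal F(x,y)|\le \mathcal R_{j-i-1}(K)$ together with the Catalan recurrence \eqref{catalan_recurrence} should show that the total number of bad paths is less than $|\mathcal F|/2$. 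The role of the recurrence is this: $\mathcal R_{j-i-1}(K)$ is exactly the bound obtained by splitting a segment at an intermediate layer, so the contributions from the residual families telescope against $\mathcal R_{k-1}(K)$. Let $\mathcal F'$ be the family of good paths. By condition~(\ref{PF_Terminal_Density}), some $b\in B$ is the endpoint of more than $\mathcal R_{k-1}(K)/2$ good paths.

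The second step is the embedding proper, with $u\mapsto b$. A path from $b$ to $L_1$ that avoids a prescribed vertex $z\in L_i$ is obtained by discarding at most $|\mathcal F(z,b)|\le\mathcal R_{k-i-1}(K)$ paths. Since there are at most $v(H)$ forbidden vertices and $K\ge 4v(H)$, the number $\mathcal R_{k-1}(K)$ of available paths dominates the $v(H)\,\mathcal R_{k-2}(K)$ discarded ones, using $C_{k-1}\ge C_{k-2}$ and one spare factor of $K$. Goodness of the paths guarantees that every vertex on a surviving path has many children and parents in $G_{\mathcal F'}$, in fact at least $K$ in each direction. Consequently the branches of each $T_i$ lying in $R_{k-1}$ and in $S^-_{k-1}$ can be grown greedily inside the support graph, as in Lemma~\ref{embedding_tree}. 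There are two cases, matching the two cases of Proposition~\ref{structure_of_F}. In case (1) the anchor $p$ lies in $S^-_{k-1}$, and we extend downward from the image of $p$ along good paths. In case (2) the anchor $p$ lies in $R_{k-1}$. Then we need a vertex $a\in L_1$ adjacent to the image of $p$ that also starts a good path to $b$ avoiding used vertices. Condition~(\ref{PF_First_Density}), with its factor $(\Delta d)^{k-2}$ against the maximum degree, should guarantee that the neighbours of the image of $p$ are such starting vertices in abundance.

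I expect the main obstacle to be the bad-set estimate of the first step. The naive counting identity fails there because the path families associated with different irregular layers overlap. I would handle this with the last-irregular-layer assignment, and then verify by induction on $j-i$ that each residual family has size at most $K^{-1}\mathcal R$ times the appropriate count. Only after this estimate is in place does the greedy embedding reduce to routine bookkeeping.
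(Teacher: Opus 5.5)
There is a genuine gap, and it lies in the orientation of your embedding. With $u\mapsto b\in L_k$ and $F_{k-1}\to L_1$, every vertex $p\in R_{k-1}$ adjacent to some $w\in F_{k-1}$ must be sent to a neighbour of $\phi(w)\in L_1$, that is, into $\{r\}\cup L_2$. In case~(2) of Proposition~\ref{structure_of_F}, $p$ is already embedded. You then need some $a\in N(\phi(p))\cap L_1$ that starts an unused layered path to the \emph{fixed} vertex $b$. Nothing in the hypotheses produces such an $a$ when $\phi(p)\in L_2$. Condition~(\ref{PF_First_Density}) is an average over $A$. It says nothing about which vertices of $L_1$ reach a particular $b$, nor how they sit relative to a particular vertex of $L_2$. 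The only vertex guaranteed to be joined to $b$ by many internally disjoint paths is $r$, and $L_0=\{r\}$ can host only one vertex of $H$.

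This already breaks for $W_2$, two copies of $C_8$ sharing $u$, with $k=4$. In each copy the antipodal vertex lies in $R_3$, and the second tree of the decomposition attaches to it as in case~(2). One antipodal vertex can be sent to $r$. The other would need a vertex $y\in L_2$ with two neighbours in $L_1$, each joined to $b$ by internally disjoint layered paths. That is an $8$-cycle through $b$ avoiding $r$, and the hypotheses give you no way to find one. Your scheme is really the theta-graph embedding, where all case-(2) attachments happen at a single vertex.

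The missing idea is to reverse the roles. Send $u\mapsto r$, each vertex of $S^-_{k-1}$ at distance $j$ from $u$ into $L_j$, $F_{k-1}$ into $L_{k-1}$, and $R_{k-1}$ into the bipartite graph between $L_{k-1}$ and $L_k$. Then $\phi(p)$ lies in $L_k$. What you need is that \emph{every} surviving terminal vertex is joined to $r$ by at least $K/2$ internally disjoint paths of the cleaned family. This is where condition~(\ref{PF_Path_Bound}) and the Catalan recurrence are used. The $r$--$b$ paths meeting a fixed one internally number at most $\sum_{j=1}^{k-1}\mathcal R_{j-1}(K)\mathcal R_{k-j-1}(K)=\mathcal R_{k-1}(K)/K$. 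So a maximal internally disjoint family has more than $K/2$ members once $b$ retains more than $\mathcal R_{k-1}(K)/2$ paths. These paths also supply the $K/2$ active parents of $b$ needed for the greedy step in $L_{k-1}\cup L_k$.

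Your first step is also aimed at the wrong target. The last-irregular-layer residual decomposition belongs to the bad-set estimate of Lemma~\ref{Induction_Step}, not to this proposition. Here the cleaning is plain iterated deletion of two kinds:
\begin{enumerate}
\item[(i)] terminal vertices carrying at most $\mathcal R_{k-1}(K)/2$ current paths;
\item[(ii)] vertices in $L_1,\dots,L_{k-1}$ with fewer than $K/2$ active children.
\end{enumerate}
The losses are bounded directly by the hypotheses. By condition~(\ref{PF_Terminal_Density}), deletions of type (i) cost at most $|B|\mathcal R_{k-1}(K)/2<|\mathcal F|/2$. By condition~(\ref{PF_First_Density}), deletions of type (ii) cost at most $\sum_{j=1}^{k-1}|A|(\Delta d)^{j-1}\tfrac{K}{2}(\Delta d)^{k-j-1}<\tfrac{k-1}{2k}|\mathcal F|$. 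No cleaning of this kind gives intermediate vertices many parents. So your claim of at least $K$ neighbours ``in each direction'' is also unsupported.
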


\begin{proof}
Starting with $\mathcal F_0:=\mathcal F$, we repeatedly apply the
following two cleaning operations whenever they are applicable, in an
arbitrary order, until neither operation can be applied. A vertex or an
edge is called \emph{active} if it occurs in at least one path of the
current family. For an active vertex $x\in L_j$, an \emph{active child}
of $x$ is a vertex $y\in L_{j+1}$ such that the edge $xy$ occurs in at
least one path of the current family.

\begin{enumerate}
    \item[(i)] If an active terminal vertex $b\in B$ is the terminal
    vertex of at most
    \[
    \frac{\mathcal R_{k-1}(K)}{2}
    \]
    current paths, delete all current paths ending at $b$.

    \item[(ii)] If an active vertex $x\in L_j$, where
    $1\leq j\leq k-1$, has fewer than $K/2$ active children in
    $L_{j+1}$, delete all current paths containing $x$.
\end{enumerate}

Each application deletes at least one current path. Since $\mathcal F$
is finite, the process terminates. We next show that some paths remain
after the process terminates. The estimates below hold regardless of
the order in which the two operations are performed.

Each terminal vertex can be selected in operation~(i) at most once,
since afterward all current paths ending at that vertex have been
deleted and it becomes inactive. Therefore, the total number of paths
deleted by operation~(i) is at most
\[
|B|\frac{\mathcal R_{k-1}(K)}{2}
<
\frac{|\mathcal F|}{2},
\]
by condition~\ref{PF_Terminal_Density}.

Now fix $j\in\{1,\ldots,k-1\}$. At the moment when an active vertex
$x\in L_j$ is deleted by operation~(ii), every current path containing
$x$ is determined by a layered prefix from a vertex of $A$ to $x$, one
of fewer than $K/2$ active edges from $x$ to $L_{j+1}$, and a
continuation from $L_{j+1}$ to $L_k$.

There are at most
\[
|A|(\Delta d)^{j-1}
\]
layered paths from $A$ to $L_j$. Once the edge from $L_j$ to
$L_{j+1}$ has been chosen, there are at most
\[
(\Delta d)^{k-j-1}
\]
possible continuations to $L_k$. Hence the total number of paths
deleted by operation~(ii) at layer $L_j$ is at most
\[
|A|(\Delta d)^{j-1}
\frac{K}{2}
(\Delta d)^{k-j-1}
=
\frac{K}{2}|A|(\Delta d)^{k-2}.
\]
Here every path is counted only when it is deleted.

Summing over $j=1,\ldots,k-1$, the total number of paths deleted by
operation~(ii) is at most
\[
\frac{(k-1)K}{2}|A|(\Delta d)^{k-2}
<
\frac{k-1}{2k}|\mathcal F|,
\]
where condition~\ref{PF_First_Density} was used.

Consequently, the total number of paths deleted by the two operations
is strictly less than
\[
\frac{|\mathcal F|}{2}
+
\frac{k-1}{2k}|\mathcal F|
<
|\mathcal F|.
\]
Thus some paths remain. Let $\mathcal F^\ast$ denote the nonempty
family remaining when the process terminates.

By construction, the surviving family $\mathcal F^\ast$ satisfies the following properties:
\begin{enumerate}
    \item[$(\bullet)$] Every active vertex in
    $L_1,\ldots,L_{k-1}$ has at least $K/2$ active children in the
    next layer.

    \item[$(\bullet\bullet)$] Every active terminal vertex $b\in L_k$
    satisfies
    \[
    |\mathcal F^\ast(r,b)|
    >
    \frac{\mathcal R_{k-1}(K)}{2}.
    \]
\end{enumerate}

We next show that:\\

$(\bullet\bullet\bullet)$ Every active terminal vertex $b\in L_k$ is joined
to $r$ by at least $K/2$ pairwise internally vertex-disjoint paths
belonging to $\mathcal F^\ast$.

Fix
\[
Q=rv_1\cdots v_{k-1}b\in\mathcal F^\ast(r,b).
\]
For each $j\in\{1,\ldots,k-1\}$, every path in
$\mathcal F^\ast(r,b)$ containing $v_j$ is determined by its
$r$--$v_j$ prefix and its $v_j$--$b$ suffix. Therefore,
\[
\begin{aligned}
&\left|
\left\{
P\in\mathcal F^\ast(r,b):
P\text{ meets }Q\text{ at an internal vertex}
\right\}
\right| \\
&\qquad\leq
\sum_{j=1}^{k-1}
|\mathcal F^\ast(r,v_j)|
\,|\mathcal F^\ast(v_j,b)|.
\end{aligned}
\]
Since $\mathcal F^\ast\subseteq\mathcal F$, condition~\ref{PF_Path_Bound} gives
\[
\begin{aligned}
\sum_{j=1}^{k-1}
|\mathcal F^\ast(r,v_j)|
\,|\mathcal F^\ast(v_j,b)|
&\leq
\sum_{j=1}^{k-1}
\mathcal R_{j-1}(K)\mathcal R_{k-j-1}(K) \\
&=
K^{k-2}
\sum_{j=1}^{k-1}
\mathcal C_{j-1}\mathcal C_{k-j-1} \\
&=
K^{k-2}\mathcal C_{k-1} \\
&=
\frac{\mathcal R_{k-1}(K)}{K},
\end{aligned}
\]
where the penultimate equality follows from
\eqref{catalan_recurrence}.

Let $\mathcal Q$ be a maximal family of pairwise internally
vertex-disjoint paths in $\mathcal F^\ast(r,b)$. By maximality, every
path in $\mathcal F^\ast(r,b)$ meets some member of $\mathcal Q$ at
an internal vertex. Hence
\[
|\mathcal F^\ast(r,b)|
\leq
|\mathcal Q|
\frac{\mathcal R_{k-1}(K)}{K}.
\]
Since
\[
|\mathcal F^\ast(r,b)|
>
\frac{\mathcal R_{k-1}(K)}{2},
\]
we obtain
\[
|\mathcal Q|>\frac{K}{2}.
\]
Thus Property $(\bullet\bullet\bullet)$ is established. 
In particular,
these paths have distinct first vertices in $L_1$, so the root $r$
also has at least $K/2$ active children.

Let $G^\ast$ be the support graph of $\mathcal F^\ast$. 
We now embed $H$ into $G^\ast$. Consider the decomposition
\[
H=
\left(\bigcup_{i=1}^{\ell}T_i\right)
\cup T_{\mathrm{rest}}
\]
provided by Proposition~\ref{structure_of_F}. We embed
$T_1,\ldots,T_\ell$ successively and then embed
$T_{\mathrm{rest}}$.

We first embed $T_1$. Map the vertex $u$ to the root $r$. We embed
\[
T_1\cap S_{k-1}^{-}
\]
greedily into the first $k-2$ layers, respecting distance from $u$,
and place $w_1$ in $L_{k-1}$. This is possible because every active
vertex in the first $k-2$ layers has at least $K/2\geq 2v(H)$ active children.

We then embed the remaining vertices of $T_1$ into
\[
G^\ast[L_{k-1}\cup L_k].
\]
Every active vertex in $L_{k-1}$ has at least $K/2$ active children
in $L_k$, and every active vertex in $L_k$ has at least $K/2$ active
parents in $L_{k-1}$. Since fewer than $v(H)$ vertices are used
throughout the embedding and
\(
K/2\geq 2v(H),
\)
the remaining part of $T_1$ can be embedded greedily while avoiding
all previously used vertices.

Suppose inductively that
\[
H_t:=\bigcup_{i=1}^{t}T_i
\]
has already been embedded for some $t<\ell$, and consider
$T_{t+1}$. By property~(b) of
Proposition~\ref{structure_of_F}, the graphs
$T_{t+1}\setminus\{u\}$ and $H_t$ intersect in at most one vertex.

If
\[
H_t\cap T_{t+1}=\{u\},
\]
we embed $T_{t+1}$ in the same way as $T_1$.

Otherwise, let
\[
v\in H_t\cap T_{t+1},
\qquad v\neq u.
\]
We distinguish two cases.

\medskip
\noindent
\textbf{Case 1.} $v\in S_{k-1}^{-}\setminus\{u\}$.

By the construction in Proposition~\ref{structure_of_F}, the vertex
$v$ is the parent of $w_{t+1}$. Since $w_{t+1}\in F_{k-1}$, the
vertex $v$ is at distance $k-2$ from $u$, and its image lies in
$L_{k-2}$. The image of $v$ has at least $K/2$ active children in
$L_{k-1}$. We may therefore choose an unused active child as the image
of $w_{t+1}$.

We then embed the remaining vertices of $T_{t+1}$ greedily into
\[
G^\ast[L_{k-1}\cup L_k],
\]
using active children when the previously embedded vertex lies in
$L_{k-1}$ and active parents when it lies in $L_k$. Since fewer than
$v(H)$ vertices are forbidden, the active degree condition guarantees
that the embedding succeeds.

\medskip
\noindent
\textbf{Case 2.} $v\in R_{k-1}$.

By the construction in Proposition~\ref{structure_of_F}, the vertex
$v$ is a child of $w_{t+1}$, and its image lies in $L_k$. Moreover,
the path $P_{w_{t+1}u}$ is contained in $T_{t+1}$.

The image of $v$ is joined to $r$ by at least $K/2$ pairwise
internally vertex-disjoint layered paths in $G^\ast$. Since these
paths are pairwise internally disjoint, every previously used vertex,
other than $r$ and the image of $v$, lies internally on at most one
of them. As fewer than $v(H)$ vertices have already been used and
\[
\frac{K}{2}\geq 2v(H),
\]
we may choose one of these paths whose internal vertices have not yet
been used.

We use the chosen path to embed
\[
P_{w_{t+1}u}\cup\{w_{t+1}v\},
\]
with $u$ mapped to $r$ and $v$ mapped to its already prescribed
image. In particular, $w_{t+1}$ is mapped to the penultimate vertex
of the chosen path in $L_{k-1}$.

We then embed the remaining vertices of $T_{t+1}$ greedily. Vertices
belonging to $S_{k-1}^{-}$ are embedded into the corresponding
distance layers using active children, and all other vertices are
embedded into
\[
G^\ast[L_{k-1}\cup L_k]
\]
using active children and active parents. Again, the active degree
condition and the inequality $K/2\geq 2v(H)$ ensure that all previously
used vertices can be avoided.

After embedding $T_1,\ldots,T_\ell$, we embed
$T_{\mathrm{rest}}$. Every vertex of
$T_{\mathrm{rest}}\setminus\{u\}$ has distance at most $k-2$ from
$u$, since no component defining $T_{\mathrm{rest}}$ intersects
$F_{k-1}$. Hence $T_{\mathrm{rest}}$ can be embedded greedily into
the first $k-2$ layers using the active-child property and avoiding
all previously used vertices.

Thus $H$ embeds into $G^\ast$, the support graph of
$\mathcal F^\ast$. Since $\mathcal F^\ast\subseteq\mathcal F$, the
graph $H$ also embeds into the support graph of $\mathcal F$, and
hence into $G$.
\end{proof}

We next extract a counting consequence of the path-family embedding
proposition. Fix a vertex $v$ in an intermediate layer and consider a
family $\mathcal F$ of layered paths from $v$ to $L_k$. Intuitively, a
terminal vertex $w$ is regarded as \textbf{bad} if there are too many paths in
$\mathcal F$ from $v$ to $w$. Assuming that all non-extreme subpath
counts satisfy the prescribed bounds, the following corollary shows
that the total number of paths ending at such bad vertices is small.

\begin{coro}
\label{Path_Family_Counting}
Let $G$ be an $H$-free bipartite graph with maximum degree at most
$\Delta d$. Let $r$ be a root, and let
\[
L_0=\{r\},L_1,\ldots,L_k
\]
be the corresponding distance layers in $G$, where $k\leq k^*$.
Let $K\geq 4v(H)$ be a sufficiently large constant. Fix an integer
$i\leq k-2$ and a vertex $v\in L_i$, and let $\mathcal F$ be a family
of layered paths from $v$ to $L_k$. Let $B\subseteq L_k$ be the set of
terminal vertices of the paths in $\mathcal F$.

Suppose that for every $x\in L_a$ and $y\in L_b$, with
\[
i\le a<b\le k
\qquad\text{and}\qquad
(a,b)\neq(i,k),
\]
we have
\[
|\mathcal F(x,y)|
\le
\mathcal R_{b-a-1}(K).
\]

Let
\[
W=\{w\in B:\ |\mathcal F(v,w)|>\mathcal R_{k-i-1}(K)\}.
\]
Then
\[
\sum_{w\in W}|\mathcal F(v,w)|
=
O(d^{k-i-1}).
\]
\end{coro}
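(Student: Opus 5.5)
The plan is to apply Proposition~\ref{Key_Embedding} to the subfamily of $\mathcal F$ consisting of paths that end in $W$, re-rooted at $v$, and to read off the bound from the failure of its density condition~\ref{PF_First_Density}. If $W=\emptyset$ there is nothing to prove, so assume $W\neq\emptyset$. Set $m:=k-i$; since $i\le k-2$ and $k\le k^\ast$, we have $2\le m\le k^\ast$, which is exactly the range where Proposition~\ref{structure_of_F} and Proposition~\ref{Key_Embedding} apply. Let
\[
\mathcal F_W:=\{P\in\mathcal F:\ P \text{ ends in } W\}.
\]
These are layered paths $v=v_i v_{i+1}\cdots v_k$, and after relabelling $L'_s:=L_{i+s}$ they become paths $v_0'v_1'\cdots v_m'$ with $v_0'=v$. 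Let $A\subseteq L_{i+1}$ be the set of their second vertices. The terminal set of $\mathcal F_W$ is $W$ itself.

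First I would check the hypotheses of Proposition~\ref{Key_Embedding} for $\mathcal F_W$ with $k$ replaced by $m$. Condition~\ref{PF_Path_Bound} is inherited from the hypothesis on $\mathcal F$, since $\mathcal F_W\subseteq\mathcal F$ gives $|\mathcal F_W(x,y)|\le|\mathcal F(x,y)|\le\mathcal R_{b-a-1}(K)$ for all pairs $(a,b)\neq(i,k)$. Condition~\ref{PF_Terminal_Density} holds by the definition of $W$: every $w\in W$ satisfies $|\mathcal F_W(v,w)|=|\mathcal F(v,w)|>\mathcal R_{m-1}(K)$, and summing over $w\in W$ gives $|\mathcal F_W|/|W|>\mathcal R_{m-1}(K)$.

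The one point needing care is that Proposition~\ref{Key_Embedding} is stated for distance layers from a root, whereas $L_{i},\dots,L_k$ are distance layers from $r$, not from $v$. I would argue that the proof of the proposition never uses the distance-layer property. It uses only three facts: the family consists of layered paths in consecutive vertex-disjoint layers; the maximum degree is at most $\Delta d$, which bounds prefixes and continuations; and children and parents of active vertices lie in adjacent layers. The cleaning procedure, the Catalan bound giving $(\bullet\bullet\bullet)$, and the greedy tree embedding via Proposition~\ref{structure_of_F} with parameter $m$ all go through verbatim with root $v$ and layers $L_i,\dots,L_k$. This verification is the main (mild) obstacle, and I would state it as a remark rather than redo the proof.

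Since $G$ is $H$-free, the conclusion of Proposition~\ref{Key_Embedding} fails, so condition~\ref{PF_First_Density} must fail:
\[
|\mathcal F_W|\le Km(\Delta d)^{m-2}|A|.
\]
Because $A$ consists of neighbours of $v$, we have $|A|\le\Delta d$. Therefore
\[
\sum_{w\in W}|\mathcal F(v,w)|=|\mathcal F_W|\le Km\,\Delta^{m-1}d^{m-1}=O(d^{k-i-1}),
\]
with the implied constant depending only on $K$, $k$ and $\Delta$. This is the claimed bound.
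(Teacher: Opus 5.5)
Your proposal is correct and follows the paper's proof essentially line for line. You restrict to the paths ending in $W$ and re-root at $v$ with shifted layers $L'_s=L_{i+s}$. Condition~1 is inherited from $\mathcal F_W\subseteq\mathcal F$, condition~3 follows from the definition of $W$, and the bound comes from the failure of condition~2 together with $|A|\le\Delta d$.

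Your added remark is also accurate: Proposition~\ref{Key_Embedding} uses only the disjoint layered structure and the degree bound, not the fact that the layers are distance layers from the new root. This makes explicit a point the paper passes over silently.
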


\begin{proof}
If $W=\varnothing$, then the conclusion is immediate. Hence, assume
that $W\neq\varnothing$, and set $m:=k-i$.
Let
\[
\mathcal F_W:=\bigcup_{w\in W}\mathcal F(v,w).
\]
We regard $\mathcal F_W$ as a family of layered paths rooted at $v$,
with respect to the shifted layers
\[
L_0'=\{v\},
\qquad
L_j'=L_{i+j}\quad (1\leq j\leq m).
\]
Let $A\subseteq L_{i+1}$ be the set of first vertices occurring in
paths of $\mathcal F_W$. The terminal set of $\mathcal F_W$ is $W$.

Since $\mathcal F_W\subseteq\mathcal F$, the assumed bounds on
$\mathcal F(x,y)$ imply condition~\ref{PF_Path_Bound} of
Proposition~\ref{Key_Embedding} for every non-extreme pair in the
shifted layer system. Moreover, by the definition of $W$,
\[
\frac{|\mathcal F_W|}{|W|}
=
\frac{\sum_{w\in W}|\mathcal F(v,w)|}{|W|}
>
\mathcal R_{k-i-1}(K)
=
\mathcal R_{m-1}(K).
\]
Thus condition~\ref{PF_Terminal_Density} also holds.

Since $G$ is $H$-free, condition~\ref{PF_First_Density} cannot hold;
otherwise Proposition~\ref{Key_Embedding} would yield a copy of $H$.
Therefore,
\[
|\mathcal F_W|
\leq
|A|Km(\Delta d)^{m-2}.
\]
Since $A\subseteq N_G(v)$ and $\Delta(G)\leq\Delta d$, we have
\(
|A|\leq\Delta d.
\)
Consequently,
\[
\sum_{w\in W}|\mathcal F(v,w)|
=
|\mathcal F_W|
\leq
\Delta d\cdot Km(\Delta d)^{m-2}
=
O(d^{m-1})
=
O(d^{k-i-1}),
\]
as required.
\end{proof}

\subsection{Bad-set estimates}

We first note a gap on page~8 in the proof of Lemma~3.2
of~\cite{Bukh_theta}, at the displayed equality decomposing the paths
from $r$ to $B_i'$. There, the paths are classified according to a
vertex witnessing the badness of their terminal vertex. However, a
terminal vertex may be bad because of one vertex in $L_{i-1}$, while a
path reaching it passes through another vertex in that layer. Hence
the asserted counting equality need not hold.

Define the set of \emph{bad vertices} in $L_k$ by
\begin{equation}
\label{B'_definition}
B=
\left\{
v_k\in L_k:
\text{ there exist $0\leq i<k$ and $v_i\in L_i$ such that }
|\mathcal P(v_i,v_k)|>\mathcal R_{k-i-1}(K)
\right\}.
\end{equation}

Accordingly, for $v\in L_{i}$ and $v_k\in B$, we say that $v$ is a
\emph{witness} for $v_k$ if
\[
|\mathcal P(v,v_k)|>\mathcal R_{k-i-1}(K).
\]

To bound both the number of bad vertices and the number of paths from
the root to these vertices, we first partition the bad vertices
according to the largest index at which their badness is witnessed.
We then construct a descending residual-family decomposition of
$\mathcal P(r,B)$ and apply Corollary~\ref{Path_Family_Counting} at
each stage.

\begin{lemma}
\label{Induction_Step}
Let $2\leq k\leq k^*$, and let $G$ be a bipartite graph with root
$r$ and layers
\[
L_0=\{r\},L_1,\ldots,L_k.
\]
Assume that $G$ is $H$-free and has maximum degree at most
$\Delta d$. Let $K\geq 4v(H)$ be a sufficiently large constant.
Suppose that, for any $v_i\in L_i$ and $v_j\in L_j$ with
$0\leq i<j\leq k-1$,
\begin{equation}
\label{the_regular_condition}
|\mathcal P(v_i,v_j)|
\leq
\mathcal R_{j-i-1}(K).
\end{equation}
Then
\[
|\mathcal P(r,B)|=O(d^{k-1})
\qquad\text{and}\qquad
|B|=O(d^{k-1}).
\]
\end{lemma}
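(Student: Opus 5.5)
We partition $B$ according to the \emph{largest} index at which badness is witnessed. For $0\le i\le k-1$ let $B_i$ be the set of $w\in B$ that have a witness in $L_i$ but none in $L_{i'}$ for any $i'>i$. Note that $i\le k-2$ whenever $B_i\neq\varnothing$: for $v\in L_{k-1}$ we have $|\mathcal P(v,w)|\le 1=\mathcal R_0(K)$, so $L_{k-1}$ contains no witnesses. Since there are at most $k-1$ classes, it suffices to show $|\mathcal P(r,B_i)|=O(d^{k-1})$ for each $i$. The bound on $|B|$ then follows, because every $w\in B$ is reached by at least one path from $r$ (as $w\in L_k$), so $|B|\le|\mathcal P(r,B)|$.

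Fix $i$. Every path $P\in\mathcal P(r,B_i)$ passes through a unique vertex $v=P(i)\in L_i$. We split
\[
\mathcal P(r,B_i)=\bigcup_{v\in L_i}\mathcal P(r,v)\circ\mathcal P(v,B_i),
\]
and this union is disjoint because $v$ is read off from the path itself. This is the residual-family decomposition that repairs the gap: we never assign a path to a witness it does not pass through. Using \eqref{the_regular_condition} we get $|\mathcal P(r,v)|\le\mathcal R_{i-1}(K)=O(1)$ for $i\ge1$ (and the case $i=0$ is trivial). Hence
\[
|\mathcal P(r,B_i)|\le O(1)\sum_{v\in L_i}|\mathcal P(v,B_i)|.
\]
For each $v$ we split $\mathcal P(v,B_i)$ into two parts. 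The first consists of paths ending at some $w$ for which $v$ is itself a witness. The second consists of paths ending at some $w$ with $|\mathcal P(v,w)|\le\mathcal R_{k-i-1}(K)$.

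For the first part we apply Corollary~\ref{Path_Family_Counting} with $\mathcal F=\mathcal P(v,B_i)$. Its hypothesis needs $|\mathcal F(x,y)|\le\mathcal R_{b-a-1}(K)$ for all non-extreme pairs $(a,b)$. When $b\le k-1$ this follows from \eqref{the_regular_condition}. When $b=k$ and $a>i$ it holds because $y\in B_i$ has no witness in layers above $i$; this is precisely why we chose the \emph{largest} witnessing index. The corollary then bounds the first part by $O(d^{k-i-1})$.

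The main obstacle is turning these per-vertex bounds into the global bound $O(d^{k-1})$. Summing over $v\in L_i$ naively gives $|L_i|\,O(d^{k-i-1})$, which is too large. We therefore count from the other side instead: each $w\in B_i$ has a witness $v$, so $|\mathcal P(v,w)|>\mathcal R_{k-i-1}(K)$. We will try to show that the number of pairs $(v,w)$ with $w\in B_i$ is controlled through $|\mathcal P(r,v)|$ together with the corollary.

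More directly, we apply Corollary~\ref{Path_Family_Counting} to the whole family. We take $\mathcal F_i=\mathcal P(r,B_i)$ rooted at $r$ itself, with the corollary's index $i$ set to $0$, and check its hypotheses. Pairs $(a,b)$ with $b\le k-1$ are covered by \eqref{the_regular_condition}. Pairs $(a,k)$ with $a>i$ are covered by the maximality of $i$. Pairs $(a,k)$ with $1\le a\le i$ are the remaining issue. The first thing I would try is a descending induction on $i$: we restrict to the residual family of paths whose subpaths from layer $a$ to $L_k$ are regular for all $a\ge1$, and peel off the remaining irregular paths layer by layer. Each peeling is bounded via Corollary~\ref{Path_Family_Counting} at level $a$ times $|\mathcal P(r,x)|\le\mathcal R_{a-1}(K)$, summed over the at most $\Delta d\cdot(\Delta d)^{a-1}$ prefixes. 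This gives $O(d^{a-1})\cdot O(d^{k-a-1})\cdot$ (the number of relevant $x$), where the count of relevant $x$ is controlled by $|\mathcal F(r,x)|$. Once all irregularities are peeled, the corollary at level $0$ bounds the fully regular residual family (whose terminals still satisfy $|\mathcal F(r,w)|>\mathcal R_{k-1}(K)$ or are absorbed) by $O(d^{k-1})$. Verifying that the peeled pieces sum to $O(d^{k-1})$ is the delicate step.
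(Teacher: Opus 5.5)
There is a genuine gap. Your first part is sound. For $v\in L_i$, the paths from $v$ to those $w\in B_i$ for which $v$ is a witness satisfy the corollary's hypotheses, by the lemma's hypothesis and the maximality of $i$. Weighting by $|\mathcal P(r,v)|\le\mathcal R_{i-1}(K)$ and summing over $L_i$ gives $O(d^{i})\cdot O(d^{k-i-1})=O(d^{k-1})$. Your remark that this sum is ``too large'' is mistaken, since $|L_i|\le(\Delta d)^i$.

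The problem is the second part, which you never bound: the paths from $v$ to vertices $w\in B_i$ for which $v$ is \emph{not} a witness. Each such pair $(v,w)$ carries at most $\mathcal R_{k-i-1}(K)$ paths. However, the only bound your argument gives on the number of such pairs is $|L_i|(\Delta d)^{k-i}=O(d^{k})$. This is exactly the Bukh--Tait gap described in the paper: $w$ is bad because of one vertex of $L_i$, while the path runs through another. Such a path can only be charged to a witness in a lower layer, and there the corollary cannot be applied to the full family, because other vertices of $L_i$ violate the non-extreme bound at the pair $(i,k)$.

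Your peeling sketch is the right instinct, but it is left unverified, and three things are wrong or missing. First, it does not say that at each stage the threshold must be imposed on the suffix counts of the \emph{current residual} family, which is what makes the removed paths exactly those the corollary controls. Second, the count $O(d^{a-1})\cdot O(d^{k-a-1})\cdot(\text{number of relevant }x)$ double-counts prefixes and yields $O(d^{k+a-2})$. Third, the terminals ``absorbed'' at the end can only be controlled by $\mathcal R_{k-1}(K)|B|$, so your plan to deduce $|B|$ from $|\mathcal P(r,B)|$ is circular.

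What is missing is the paper's descending residual decomposition. Put $\mathcal P_{k-1}(r,B)=\mathcal P(r,B)$. Then, for $a=k-2,\dots,0$ in this order, delete from $\mathcal P_{a+1}(r,B)$ every path $rv_1\cdots w$ for which the number of distinct $v_a$--$w$ suffixes of paths in $\mathcal P_{a+1}(r,B)$ exceeds $\mathcal R_{k-a-1}(K)$.

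Because the stages $a'>a$ have already been processed, the residual family from each $x\in L_a$ satisfies the corollary's non-extreme bounds. Hence the deleted suffixes at $x$ number $O(d^{k-a-1})$. Because membership in the residual family depends only on the suffix from $L_a$ onward, the deleted paths number at most $|\mathcal P(r,L_a)|\cdot O(d^{k-a-1})=O(d^{k-1})$. After stage $0$, each terminal carries at most $\mathcal R_{k-1}(K)$ surviving paths, so the survivors number at most $\mathcal R_{k-1}(K)|B|$.

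This last step needs $|B|=O(d^{k-1})$ independently, and your first part already supplies it. For $v\in L_i$, let $W(v)\subseteq B_i$ be the set of terminals witnessed by $v$. The corollary gives $|W(v)|=O(d^{k-i-1})$, whence $|B_i|\le\sum_{v\in L_i}|W(v)|=O(d^{k-1})$.
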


\begin{proof}
We first decompose $B$ according to the largest index $i$ for which a
vertex of $B$ is bad from some vertex in $L_{i-1}$. More precisely, for $i=k-1,\ldots,1$, define
\[
B_i''
=
\left\{
v_k\in L_k:
\text{ there exists }w\in L_{i-1}
\text{ such that }
|\mathcal P(w,v_k)|>\mathcal R_{k-i}(K)
\right\}.
\]
Define inductively
\[
B_{k-1}'=B_{k-1}'',
\qquad
B_i'=B_i''\setminus\bigcup_{t=i+1}^{k-1}B_t',
\quad i=k-2,\ldots,1.
\]

Since $\mathcal R_0(K)=1$ and there is at most one layered path between
two vertices in consecutive layers, no vertex of $B$ can satisfy the
defining inequality with a vertex in $L_{k-1}$. Hence every vertex of
$B$ belongs to $B_i''$ for some $i\in\{1,\ldots,k-1\}$.
The sets $B_i'$ are pairwise disjoint and
\[
B=\bigcup_{i=1}^{k-1}B_i'.
\]

Accordingly, let
\[
B_i''(v)
=
\left\{
v_k\in L_k:
|\mathcal P(v,v_k)|>\mathcal R_{k-i}(K)
\right\},
\]
and set
\[
B_i'(v):=B_i'\cap B_i''(v).
\]
Thus, $B_i'(v)$ consists of the vertices in $B_i'$ for which $v$ is a
witness. Clearly,
\[
B_i'=\bigcup_{v\in L_{i-1}}B_i'(v).
\]
This union need not be disjoint, since a vertex of $B_i'$ may have
several witnesses in $L_{i-1}$.

Fix $i\in\{1,\ldots,k-1\}$ and $v\in L_{i-1}$, and define
\[
\mathcal F_v
=
\bigcup_{w\in B_i'(v)}\mathcal P(v,w).
\]
We regard $\mathcal F_v$ as a family of layered paths from $v$ to
$L_k$. We verify the path bounds required by
Corollary~\ref{Path_Family_Counting}. Let $x\in L_a$ and $y\in L_b$,
where
\[
i-1\leq a<b\leq k
\qquad\text{and}\qquad
(a,b)\neq(i-1,k).
\]
If $b\leq k-1$, then \eqref{the_regular_condition} gives
\[
|\mathcal F_v(x,y)|
\leq
|\mathcal P(x,y)|
\leq
\mathcal R_{b-a-1}(K).
\]
Now suppose that $b=k$. Then $y\in B_i'(v)\subseteq B_i'$. If
$a\leq k-2$, the maximality of $i$ in the definition of $B_i'$ implies
\[
|\mathcal P(x,y)|
\leq
\mathcal R_{k-a-1}(K);
\]
otherwise $y$ would belong to $B_{a+1}''$ and hence to some
$B_t'$ with $t>i$. The case $a=k-1$ follows from
$\mathcal R_0(K)=1$, since there is at most one layered path between
adjacent vertices. Thus, in every case,
\[
|\mathcal F_v(x,y)|
\leq
\mathcal R_{b-a-1}(K).
\]

Moreover, for every $w\in B_i'(v)$,
\[
|\mathcal F_v(v,w)|
=
|\mathcal P(v,w)|
>
\mathcal R_{k-i}(K).
\]
Hence every terminal vertex of $\mathcal F_v$ belongs to the set $W$
in Corollary~\ref{Path_Family_Counting}. Therefore,
\[
\sum_{w\in B_i'(v)}|\mathcal P(v,w)|
=
O(d^{k-i}).
\]
Since
\[
\mathcal R_{k-i}(K)|B_i'(v)|
<
\sum_{w\in B_i'(v)}|\mathcal P(v,w)|,
\]
we obtain
\[
|B_i'(v)|=O(d^{k-i}).
\]

Since $|L_{i-1}|=O(d^{i-1})$, we obtain
\[
|B_i'|
\le
\sum_{v\in L_{i-1}}|B_i'(v)|
=
O(d^{k-1}).
\]
Consequently,
\[
|B|=O(d^{k-1}).
\]

It remains to estimate $|\mathcal P(r,B)|$. 
Starting from $\mathcal P(r,B)$, we successively delete suitable subfamilies while processing the layers from $L_{k-2}$ toward the root.
At stage $i$, we remove all remaining paths for which
the suffix from their vertex in $L_i$ to their terminal vertex occurs
too many times in the current residual family.

Set
\[
\mathcal P_{k-1}(r,B):=\mathcal P(r,B).
\]
We then define the residual families recursively. For
$i=k-2,k-3,\ldots,0$, suppose we have already defined 
$\mathcal P_{i+1}(r,B)$. Following the notation introduced in~\ref{descending_family}, for  $v\in L_i$ and $w\in B$,
 define 
the residual suffix family
\[
\mathcal P_{i+1}(v,w)
=
\left\{
Q:
\begin{array}{l}
\text{there exists }P\in\mathcal P_{i+1}(r,B)\\
\text{whose suffix from $v$ to $w$ is $Q$}
\end{array}
\right\}.
\]

For $v\in L_i$, define
\[
W_i(v)
=
\left\{
w\in B:
|\mathcal P_{i+1}(v,w)|
>
\mathcal R_{k-i-1}(K)
\right\},
\]
and let
\[
V_i
=
\left\{
v\in L_i:W_i(v)\neq\varnothing
\right\}.
\]

Let $\mathcal Q_i(r,B)$ be the set of paths
\(
P=rv_1\cdots v_i\cdots w
\)
in $\mathcal P_{i+1}(r,B)$ such that
\(
w\in W_i(v_i),
\)
and set
\[
\mathcal P_i(r,B)
=
\mathcal P_{i+1}(r,B)\setminus\mathcal Q_i(r,B).
\]

We {\bf claim} that
\(
|\mathcal Q_i(r,B)|=O(d^{k-1})
\), 
for every $i=0,\ldots,k-2$. 

\begin{proof}[Proof of the Claim.]
Fix such an $i$ and a vertex
$v\in V_i$, and consider the residual path family
\[
\mathcal F_v
=
\bigcup_{w\in W_i(v)}\mathcal P_{i+1}(v,w).
\]
We verify the path bounds required by
Corollary~\ref{Path_Family_Counting}. Let $x\in L_a$ and $y\in L_b$,
where
\[
i\leq a<b\leq k
\qquad\text{and}\qquad
(a,b)\neq(i,k).
\]
If $b\leq k-1$, then \eqref{the_regular_condition} gives
\[
|\mathcal F_v(x,y)|
\leq
|\mathcal P(x,y)|
\leq
\mathcal R_{b-a-1}(K).
\]
Suppose now that $b=k$. Then $a>i$. If $a\leq k-2$, stage $a$ was
processed before stage $i$, and the residual family
$\mathcal P_{i+1}(r,B)$ is contained in $\mathcal P_a(r,B)$.
Consequently,
\[
|\mathcal F_v(x,y)|
\leq
\mathcal R_{k-a-1}(K).
\]
If $a=k-1$, the same bound follows from
$\mathcal R_0(K)=1$, since there is at most one layered path between
adjacent vertices. Thus all the required non-extreme path bounds hold.

Moreover, by the definition of $W_i(v)$,
\[
|\mathcal P_{i+1}(v,w)|
>
\mathcal R_{k-i-1}(K)
\]
for every $w\in W_i(v)$. Therefore,
Corollary~\ref{Path_Family_Counting} gives
\[
\sum_{w\in W_i(v)}
|\mathcal P_{i+1}(v,w)|
=
O(d^{k-i-1}).
\]

Every path in $\mathcal Q_i(r,B)$ has a unique vertex $v$ in $L_i$
and a unique endpoint $w\in B$. By the definition of
$\mathcal Q_i(r,B)$, we then have $v\in V_i$ and $w\in W_i(v)$.
Moreover, the $r$--$v$ prefix of the path belongs to
$\mathcal P(r,v)$, while its $v$--$w$ suffix belongs to
$\mathcal P_{i+1}(v,w)$.

Conversely, for every $v\in V_i$, every $w\in W_i(v)$, every path in
$\mathcal P(r,v)$, and every path in $\mathcal P_{i+1}(v,w)$, the
concatenation of the two paths belongs to $\mathcal Q_i(r,B)$. Indeed,
since the paths proceed through successive layers, the prefix and the
suffix intersect only at $v$, and the residual condition after $L_i$
is determined entirely by the $v$--$w$ suffix. Thus this correspondence
is bijective. Hence
\[
|\mathcal Q_i(r,B)|
=
\sum_{v\in V_i}
|\mathcal P(r,v)|
\sum_{w\in W_i(v)}
|\mathcal P_{i+1}(v,w)|.
\]
Here, if $W_i(v)=\varnothing$, the corresponding inner sum is understood
to be zero.

It follows that
\[
|\mathcal Q_i(r,B)|
\leq
O(d^{k-i-1})
\sum_{v\in V_i}|\mathcal P(r,v)|
\leq
O(d^{k-i-1})|\mathcal P(r,L_i)|.
\]
Since the maximum degree of $G$ is at most $\Delta d$,
\[
|\mathcal P(r,L_i)|=O(d^i).
\]
Therefore,
\[
|\mathcal Q_i(r,B)|
=
O(d^i)O(d^{k-i-1})
=
O(d^{k-1}).
\]
\end{proof}

After all stages have been processed, the remaining paths form
$\mathcal P_0(r,B)$. By construction, for every $w\in B$ occurring
as the terminal vertex of a path in $\mathcal P_0(r,B)$,
\[
|\mathcal P_0(r,w)|
\leq
\mathcal R_{k-1}(K).
\]
Therefore,
\[
|\mathcal P_0(r,B)|
\leq
\mathcal R_{k-1}(K)|B|
=
O(d^{k-1}).
\]
Since the families
\[
\mathcal Q_{k-2}(r,B),\ldots,\mathcal Q_0(r,B),
\mathcal P_0(r,B)
\]
form a partition of $\mathcal P(r,B)$, we obtain
\[
|\mathcal P(r,B)|
=
\sum_{i=0}^{k-2}|\mathcal Q_i(r,B)|
+
|\mathcal P_0(r,B)|
=
O(d^{k-1}).
\]
This completes the proof.
\end{proof}

\subsection{Proof of Theorem~\ref{main}}
\label{subsec:proof_of_main}
The proof is based on the {\it graph exploration process}
of~\cite{Bukh_theta}, which we generalize to recursively restricted
families of layered paths. Throughout the proof, at each stage we delete
certain paths from the current family, but we do not delete vertices from
the ambient graph $G$. All subsequent path counts refer to the surviving
path family, or equivalently to its support graph, rather than to the
family of all layered paths in the original distance layers.

\begin{proof}[Proof of Theorem~\ref{main}]
Suppose for contradiction that
\(
e(G)=\omega(n^{1+1/k^*}).
\)
By Lemma~\ref{Kalmost_regular}, we may assume that $G$ is a balanced bipartite $\Delta$-almost regular graph on $n$ vertices with every vertex degree between $d$ and $\Delta d$. Since
\(
e(G)=\omega(n^{1+1/k^*}),
\)
it follows that
\(
d=\omega(n^{1/k^*}).
\)
Suppose that $G$ is $H$-free. We will eventually show that
\[
d=O(n^{1/k^*}),
\]
yielding a contradiction.

In the induction below, we begin with a family $\mathcal F$ of layered
paths from $r$ to $L_{k^\ast}$ and recursively restrict this family.
Let $\mathcal F_k$ denote the current path family at the $k$-th stage,
with $\mathcal F_1=\mathcal F$. For vertices $x$ and $y$, let
$\mathcal P_k(x,y)$ denote the family of $x$--$y$ subpaths occurring in
members of $\mathcal F_k$.

At stage $k$, we will define 
$B_k\subseteq L_k$, the set of bad vertices. Then we 
remove from $\mathcal F_k$ every path whose
vertex in $L_k$ belongs to $B_k$, and denote the remaining family by
$\mathcal F_{k+1}$. The ambient graph $G$ and its original distance
layers remain unchanged. The active graph after the $k$-th cleaning is
the support graph of $\mathcal F_{k+1}$, namely the subgraph of $G$
consisting of all vertices and edges occurring in its members. 
We organize the induction step through the following
claim.

We {\bf claim} that, for each $1\leq k\leq k^\ast$, we can recursively
define a set $B_k\subseteq L_k$ such that
\begin{align}
&|B_k|=O(d^{k-1})
\quad\text{and}\quad
|\mathcal P_{k-1}(r,B_k)|=O(d^{k-1}),
\label{Bk_size_tau}\\
&|\mathcal P_k(v_i,v_j)|
\leq \mathcal R_{j-i-1}(K)
\quad\text{for all }\
v_i\in L_i,\ v_j\in L_j,\ 0\leq i<j\leq k,
\label{regular_condition}\\
&|\mathcal P_k(r,L_k)|=\Omega(d^k).
\label{linear_path_number_estimate}
\end{align}

\begin{proof}[Proof of the Claim]

We prove the claim by induction on $k$. Recall that
$\mathcal F_0=\mathcal F$. For $k=1$, set $B_1=\varnothing$, so that
$\mathcal F_1=\mathcal F_0$. Then
\[
|B_1|=0,
\qquad
|\mathcal P_0(r,B_1)|=0,
\]
and
\[
|\mathcal P_1(r,L_1)|\geq d.
\]
Moreover, since there is at most one layered path between two vertices
in consecutive layers,~\eqref{regular_condition} also holds for $k=1$.
Thus the claim holds in the base case.

Assume that the conclusion holds for all $k\leq \ell$, where
$\ell\leq k^\ast-1$.

Define the set $B'\subseteq L_{\ell+1}$ by
\begin{equation}\label{B'_definition2}
B'
=
\left\{
v\in L_{\ell+1}:
\begin{array}{l}
\text{there exist }0\leq i<\ell+1
\text{ and }v_i\in L_i\\
\text{such that }
|\mathcal P_\ell(v_i,v)|
>
\mathcal R_{\ell-i}(K)
\end{array}
\right\}.
\end{equation}

Next, let $B''\subseteq L_{\ell+1}\setminus B'$ be the set of vertices
having at least
\(
\Delta\mathcal R_\ell(K)
\)
neighbors in $B_\ell$. Since every vertex of $B_\ell$ has degree at
most $\Delta d$, we have
\[
\Delta\mathcal R_\ell(K)|B''|
\leq
\Delta d|B_\ell|.
\]
By~\eqref{Bk_size_tau}, it follows that
\begin{equation}\label{B''_estimate}
|B''|
\leq
\frac{d|B_\ell|}{\mathcal R_\ell(K)}
=
O(d^\ell).
\end{equation}

Now set
\begin{equation}\label{B_ell_plus_one_definition}
B_{\ell+1}=B'\cup B''.
\end{equation}
Define
\begin{equation}\label{F_ell_deleted_family}
\mathcal F_\ell[B_{\ell+1}]
:=
\left\{
P\in\mathcal F_\ell:
V(P)\cap B_{\ell+1}\neq\varnothing
\right\}.
\end{equation}
Since $B_{\ell+1}\subseteq L_{\ell+1}$, this is precisely the family
of paths in $\mathcal F_\ell$ whose vertex in $L_{\ell+1}$ belongs to
$B_{\ell+1}$. We then set
\begin{equation}\label{F_ell_plus_one_definition}
\mathcal F_{\ell+1}
=
\mathcal F_\ell\setminus
\mathcal F_\ell[B_{\ell+1}].
\end{equation}

Applying Lemma~\ref{Induction_Step} to the current family
$\mathcal F_\ell$ and using~\eqref{B''_estimate}, we obtain
\[
|B_{\ell+1}|
\leq
|B'|+|B''|
=
O(d^\ell)+O(d^\ell)
=
O(d^\ell).
\]

Again by Lemma~\ref{Induction_Step},
\[
|\mathcal{P}_{\ell+1}(r,B')|=O(d^{\ell}).
\]
Moreover, since $B''\subseteq L_{\ell+1}\setminus B'$, the root $r$
is not a witness for any vertex of $B''$. Hence
\[
|\mathcal P_{\ell+1}(r,B'')|
\leq \mathcal R_{\ell}(K)|B''|
=O(d^\ell).
\]
Therefore,~\eqref{Bk_size_tau} holds for $k=\ell+1$.

Since
\(
\mathcal F_{\ell+1}\subseteq \mathcal F_\ell,
\)
the induction hypothesis implies~\eqref{regular_condition} for all
$0\leq i<j\leq\ell$. Now let $v\in L_{\ell+1}$ occur in
$\mathcal F_{\ell+1}$. Then $v\notin B'$, and hence, by the definition
of $B'$,
\[
|\mathcal P_\ell(v_i,v)|
\leq
\mathcal R_{\ell-i}(K)
\]
for every $v_i\in L_i$ and $0\leq i<\ell+1$. Since
\[
\mathcal P_{\ell+1}(v_i,v)
\subseteq
\mathcal P_\ell(v_i,v),
\]
property~\eqref{regular_condition} holds for all
$0\leq i<j\leq\ell+1$.

Next, consider an active vertex $v\in L_\ell$, that is, a vertex
occurring in a path of $\mathcal F_\ell$. By
\eqref{regular_condition}, it has at most
$\mathcal R_{\ell-1}(K)$ active parents in $L_{\ell-1}$. Moreover,
since $v\notin B_\ell$, it has fewer than
$\Delta\mathcal R_{\ell-1}(K)$ neighbors in $B_{\ell-1}$. Therefore,
$v$ has at least
\(
d-(\Delta+1)\mathcal R_{\ell-1}(K)
\)
neighbors in $L_{\ell+1}$.

Consequently, before the $(\ell+1)$-th cleaning,
\begin{equation}\label{path_count_before_cleaning}
|\mathcal P_\ell(r,L_{\ell+1})|
\geq
\bigl(d-(\Delta+1)\mathcal R_{\ell-1}(K)\bigr)
|\mathcal P_\ell(r,L_\ell)|
=
\Omega(d^{\ell+1}).
\end{equation}
After removing the paths passing through $B_{\ell+1}$, we obtain
\begin{align}
|\mathcal P_{\ell+1}(r,L_{\ell+1})|
&=
|\mathcal P_\ell(r,L_{\ell+1})|
-
|\mathcal P_\ell(r,B_{\ell+1})| \notag\\
&=
\Omega(d^{\ell+1})-O(d^\ell)
=
\Omega(d^{\ell+1}).
\end{align}
Thus~\eqref{linear_path_number_estimate} holds for
$k=\ell+1$.
\end{proof}

We run the induction until $k=k^\ast$. By
\eqref{linear_path_number_estimate},
\[
|\mathcal P_{k^\ast}(r,L_{k^\ast})|
=
\Omega(d^{k^\ast}).
\]

On the other hand, by~\eqref{regular_condition}, for every
$v\in L_{k^\ast}$,
\[
|\mathcal P_{k^\ast}(r,v)|
\leq
\mathcal R_{k^\ast-1}(K).
\]
Therefore,
\[
|\mathcal P_{k^\ast}(r,L_{k^\ast})|
=
\sum_{v\in L_{k^\ast}}
|\mathcal P_{k^\ast}(r,v)|
\leq
\mathcal R_{k^\ast-1}(K)|L_{k^\ast}|
\leq
\mathcal R_{k^\ast-1}(K)n.
\]
It follows that
\(
d=O\bigl(n^{1/k^\ast}\bigr),
\)
which concludes the proof.
\end{proof}

\section{Generalized Cube}
The proof of Theorem~\ref{new_exponents} proceeds in two parts, establishing the upper and lower bound in the subsections below.   
\subsection{Upper Bounds} 
\begin{prop}\label{upper_generalized_cube}
 	For any integers $k\geq 2$ and $t\geq 2$,
	\begin{equation}
 		\text{ex}(n,E^+_{k,t})=
 			O(n^{\frac{3k-1}{2k-1}}).
 	\end{equation}
 \end{prop}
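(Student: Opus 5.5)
The plan is a double-counting argument on $4$-cycles, combining the theta-graph bound of Faudree and Simonovits~\cite{Faudree1983}, $\text{ex}(m,\theta_{k,t})=O(m^{1+1/k})$, with the standard lower bound on the number of $4$-cycles in a dense graph. Suppose $e(G)\geq Cn^{\frac{3k-1}{2k-1}}$ for a large constant $C$. By Lemma~\ref{Kalmost_regular}, applied with $\alpha=\frac{k}{2k-1}<1$, we may pass to a balanced, $\Delta$-almost regular bipartite subgraph. Renaming it $G$ and $n$, every degree lies in $[d,\Delta d]$, where $d=\Theta(n^{k/(2k-1)})$ up to the constant $C$. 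In particular $d\gg\sqrt n$.

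\emph{Local step.} For each edge $xy\in E(G)$ with $x\in X$ and $y\in Y$, let $G_{xy}$ be the bipartite subgraph of $G$ with parts $N(y)\setminus\{x\}\subseteq X$ and $N(x)\setminus\{y\}\subseteq Y$, containing every edge of $G$ between these two parts. The parts are disjoint because $G$ is bipartite, and neither contains $x$ or $y$.

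Suppose $G_{xy}$ contains a copy of $\theta_{k,t}$ with bipartition $A\cup B$. Here $A$ is the side in $N(x)$ and $B$ the side in $N(y)$. Then $x$ is adjacent to all of $A$ and $y$ is adjacent to all of $B$, and together with the edge $xy$ this gives a copy of $E^+_{k,t}$. Hence, if $G$ is $E^+_{k,t}$-free, each $G_{xy}$ is $\theta_{k,t}$-free on at most $2\Delta d$ vertices. Therefore $e(G_{xy})=O(d^{1+1/k})$ by~\cite{Faudree1983}.

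\emph{Global step.} The sum $\sum_{xy\in E(G)}e(G_{xy})$ counts each $4$-cycle of $G$ exactly four times, once per edge. So the number of $4$-cycles is
\[
\#C_4=O\bigl(e(G)\,d^{1+1/k}\bigr)=O\bigl(n\,d^{2+1/k}\bigr).
\]
For the lower bound, the number of homomorphic copies of $C_4$ is $\sum_{u,v}\text{codeg}(u,v)^2$. By Cauchy--Schwarz this is at least $\bigl(\sum_v \deg(v)^2\bigr)^2/n^2\geq c\,d^4$. The degenerate homomorphisms number $O(n d^2)$, which is $o(d^4)$ because $d\gg\sqrt n$. Hence $\#C_4=\Omega(d^4)$.

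Combining the two bounds gives $d^{2-1/k}=O(n)$, that is, $d=O(n^{k/(2k-1)})$. This forces $e(G)=O(n^{1+k/(2k-1)})=O(n^{\frac{3k-1}{2k-1}})$, contradicting the choice of $C$.

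\emph{Main obstacle.} The only delicate point is the bookkeeping in the reduction. We must check that the loss in vertex count from Lemma~\ref{Kalmost_regular} ($m\geq n^{c}$ vertices, with density constant $C/10$) does not spoil the contradiction, which holds because the exponent is preserved. We must also make sure the theta copy found in $G_{xy}$ avoids $x$ and $y$, which holds by construction. The deep input is the Faudree--Simonovits bound, which we take as given; nothing about $t$ beyond $t\geq2$ is needed.
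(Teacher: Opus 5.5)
Your proof is correct. It differs from the paper's in how the good edge is located, not in the key input.

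Both arguments reduce to the same configuration. One needs an edge $xy$ whose link graph, the bipartite graph between $N(x)\setminus\{y\}$ and $N(y)\setminus\{x\}$, has $\omega(d^{1+1/k})$ edges on $O(d)$ vertices. The Faudree--Simonovits bound then yields a $\theta_{k,t}$, which together with $x$, $y$ and the edge $xy$ forms $E^+_{k,t}$. In the paper this link graph is $G[U_1\cup U_2]$ for the edge $rr'$.

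The paper finds such an edge locally. It fixes a root $r$, restricts to $G[L_1\cup L_2]$, and uses Lemma~\ref{reduction} to give every vertex of $L_2$ $\Omega(\delta^2/n)$ neighbors in $L_1$. Then for any $r'\in L_1$ the link of $rr'$ has $\Omega(\delta^3/n)$ edges.

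You instead average over all edges. The identity $\sum_{xy}e(G_{xy})=4\#C_4$ and the Cauchy--Schwarz bound $\#C_4=\Omega(d^4)$ show that the average link has $\Omega(d^3/n)$ edges, which is exactly the paper's quantity. You then close with the clean inequality $d^{2-1/k}=O(n)$.

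Your version dispenses with the breadth-first layering and the cleaning lemma, and makes the dependence on constants transparent. The cost is the $C_4$ count and the removal of degenerate homomorphisms, which is where $d\gg\sqrt n$, i.e.\ $k/(2k-1)>1/2$, enters. The paper's version is more local, producing a good edge at an arbitrary root.

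One minor imprecision: Lemma~\ref{Kalmost_regular} gives only $d\geq (C/(5\Delta))\,m^{k/(2k-1)}$, not $d=\Theta(m^{k/(2k-1)})$. Your argument uses only this lower bound together with the maximum degree bound $\Delta d$, so nothing is affected.
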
 
\begin{proof} Suppose, for contradiction, that $G$ is an $n$-vertex graph with $e(G)=\omega(n^{\frac{3k-1}{2k-1}})$.
By Lemma~\ref{Kalmost_regular}, 
we may assume that $G$ is bipartite,  
$\Delta$-almost regular, and has minimum degree $\delta=\delta(G)=\omega(n^{\frac{k}{2k-1}})$. Fix a vertex $r$ and let $L_1$ and $L_2$ denote the sets of vertices at graph distance $1$ and $2$ from $r$, respectively. By the minimum degree assumption, $|L_1|=\Theta(\delta)$, and clearly $|L_2|< n$. The average degree of vertices in $L_2$ toward $L_1$ is
\[
d_2:=\frac{e(G[L_1\cup L_2])}{|L_2|}
=\Omega\left(\frac{\delta^2}{n}\right)
=\omega\left(n^{\frac{1}{2k-1}}\right).
\]

By Lemma~\ref{reduction}, we may further assume that within $G[L_1\cup L_2]$: 
\begin{enumerate}
    \item every vertex in $L_1$ has degree  $\Theta(\delta)$,
    \item every vertex in $L_2$ has degree  $\Omega(\delta^2/n)$, and
    \item $|L_1|=\Theta(\delta)$ remains unchanged. 
\end{enumerate}

Choose any vertex $r'\in L_1$ and let $$U_1=N_{L_2}(r'), \ |U_1|=\Theta(\delta). $$ 
Define $U_2$ as the set of vertices in $L_1$ that lie at distance $2$ from $r'$ inside $G[L_1 \cup L_2]$. Since $|U_2|\leq |L_1|=\Theta(\delta)$, both parts of the bipartite subgraph $G[U_1\cup U_2]$ have size $O(\delta)$. The number of edges in this subgraph satisfies  $$\Omega(\delta^3/n)=
\omega(\delta^{1+1/k}).$$ By the classical result that $\text{ex}(n,\theta_{k,t})=O(n^{1+1/k})$ (see~\cite{Faudree1983} and \cite{doi:10.1137/21M1408439}), a copy of $\theta_{k,t}$ must embed into $G[U_1\cup U_2]$. Finally, note that every vertex in $U_1$ is adjacent to $r'$, and every vertex in $U_2$ is adjacent to $r$. Together with the edge $rr'$, these attachments extend the $\theta_{k,t}$ found inside $G[U_1\cup U_2]$ to a copy of $E^+_{k,t}$ in $G$, contradicting the assumption that $G$ is $E^+_{k,t}$-free. 
\end{proof}

\subsection{Lower Bounds}
Suppose a graph $H$ contains an independent set of vertices 
$R=\{r_1, \dots,r_p\}$, 
which we call the \textit{set of roots}. 
Following~\cite{bukh2018rational}, we define the notion of the \textit{unrooted density}, which is 
the rational number obtained by dividing the number of edges $e(H)$ by 
the number of unrooted vertices. More precisely, we write $$\rho_r(H)=\frac{e(H)}{v(H)-|R|}.$$ 
If $S\subseteq V(H)\backslash R$, we define the \textit{local unrooted density} by $\rho_r(S)=e(S)/|S|$, where $e(S)$ is the number of edges incident to at least one vertex of $S$ in the host graph $H$. The $R$-rooted graph $H$ is called \textit{balanced} if, for every 
$S\subseteq V(H)\backslash R$,  the local unrooted density is at least the unrooted density, 
that is, 
$$\rho_r(S)\geq \rho_r(H).$$ 

In this setting, for any positive integer $t$, we define the \textit{$R$-rooted $t$-blow-up}
of $H$ as follows. Fix the root set $R$.
There are $t$ copies of $H$, which share the same root set $R$, and the $t$ sets of 
unrooted vertices are pairwise disjoint. We write $H_R^{(t)}$ for such a graph. 
Clearly, $\rho_r(H_R^{(t)})=\rho_r(H)$. More generally, we may also consider $t$ copies of $H$ with the same root set $R$, but allow the unrooted vertex sets of different copies to intersect arbitrarily. In this case, the union of all the copies may yield a graph $H'$ different from $H_R^{(t)}$.  
By Lemma 2.2 of~\cite{bukh2018rational}, for any such graph $H'$, we have 
\begin{equation}
e(H') \geq  (v(H')- |R|) \times \rho_r(H).
\end{equation}

Note that Lemma~2.2 of~\cite{bukh2018rational} is stated for blow-ups of
balanced rooted trees. The random-algebraic construction used there is
bipartite. The same proof applies to balanced rooted bipartite graphs:
the required density estimate remains valid,
for arbitrary unions of rooted copies. We therefore record the following
bipartite version of the result.

\begin{theorem}\label{rooted-lower-bound}
Let $H$ be a balanced rooted bipartite graph with independent root set
$R$. Suppose that $H$ has $e$ edges and $s$ unrooted vertices, so that
its unrooted density is
\[
\rho_r(H)=\frac{e}{s}.
\]
Then there exists a sufficiently large integer $t_0$ such that, for
every $t\geq t_0$,
\[
\operatorname{ex}\bigl(n,H_R^{(t)}\bigr)
=
\Omega\left(n^{2-\frac{1}{\rho_r(H)}}\right)
=
\Omega\left(n^{2-\frac{s}{e}}\right).
\]
\end{theorem}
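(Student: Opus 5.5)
We follow Bukh's random algebraic method as in Bukh--Conlon~\cite{bukh2018rational}; the only change is that the ambient graph is taken bipartite from the start. Let $q$ be a large prime power and $\mathbb F_q$ the field with $q$ elements. Fix a dimension $b$ and a degree bound $D$, both depending only on $H$ and $t$. Let $f:\mathbb F_q^b\times\mathbb F_q^b\to\mathbb F_q$ be a uniformly random polynomial of degree at most $D$ in each block of variables. Define the bipartite graph $G$ on $X\cup Y$, with $X=Y=\mathbb F_q^b$, by letting $x\sim y$ when $f(x,y)=0$. Then $n=2q^b$ and each edge is present with probability $1/q$. Choose $b$ and $q$ so that $q^{b}$ and $q$ are related by $q^{b(2-s/e)-?}$; concretely, we want the expected edge count $q^{2b-1}$ to be the target order, and we pass to a sparser random induced subgraph if the exponents do not match exactly. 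The standard route is the same as for trees: fix the root set $R$ in the appropriate sides and take $G$ to have about $n^{2-s/e}$ edges. Since $R$ is independent and $H$ is bipartite, the images of the roots lie in their prescribed parts consistently, so every rooted copy respects the bipartition of $G$.

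\textbf{Step 1.} Fix an ordered tuple of distinct roots $\bar r\in V(G)^{|R|}$. Let $X_{\bar r}$ be the number of rooted copies of $H$ extending $\bar r$. We bound the $k$-th moment $\mathbb E[X_{\bar r}^k]$, for fixed $k\approx D$. Such a moment counts $k$-tuples of copies sharing the roots, whose union is a graph $H'$ as in the display above. By the bipartite analogue of Lemma~2.2 of~\cite{bukh2018rational}, namely $e(H')\geq(v(H')-|R|)\rho_r(H)$ together with $D$-wise independence of the random polynomial on distinct point pairs, the expected number of each configuration is at most $n^{v(H')-|R|}q^{-e(H')}=O(1)$, by the choice $q\approx n^{1/\rho_r(H)}$ in the appropriate scaling. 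Hence $\mathbb E[X_{\bar r}^k]=O_k(1)$.

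\textbf{Step 2.} This is the algebraic dichotomy and the main obstacle. The set of extensions of $\bar r$ is the $\mathbb F_q$-points of a variety of bounded complexity, defined by the equations $f(\cdot,\cdot)=0$ over the edges of $H$. By the Lang--Weil type lemma of Bukh (Lemma 2.7 in \cite{bukh2018rational}), either $X_{\bar r}\leq C$ or $X_{\bar r}\geq q/2$, for a constant $C=C(H,D)$. Markov's inequality on the $k$-th moment then gives $\Pr[X_{\bar r}>C]\leq \mathbb E[X_{\bar r}^k]/(q/2)^k=O(q^{-k})$. There is a subtlety in making the variety argument tolerate the requirement that copies be vertex-disjoint. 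As in Bukh--Conlon, we handle it by counting degenerate copies separately: these are images of proper quotients, and the balancedness of $H$ forces them to appear in too few numbers to matter.

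\textbf{Step 3.} Call a root tuple bad if $X_{\bar r}>C$. The expected number of bad tuples is $O(n^{|R|}q^{-k})$. Each bad tuple lies in at most $O(n^{|R|-2}\cdot n)$ edges' neighbourhoods, so deleting one vertex from each bad tuple removes $o(e(G))$ edges once $k$ is large, because $q$ is a power of $n$. The remaining graph has $\Omega(n^{2-s/e})$ edges, and every root tuple extends to at most $C$ copies of $H$. Therefore, for $t_0=C+1$, the remaining graph contains no $H_R^{(t)}$ with $t\geq t_0$, since such a blow-up would give $t$ distinct extensions of one tuple. Concentration of $e(G)$ (Chebyshev) finishes the count.

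We expect the main obstacle to be Step 2 in the bipartite setting. Specifically, we must check that the moment computation still sees the whole union $H'$ with the stated density inequality, and that the Lang--Weil dichotomy applies verbatim with $X$ and $Y$ separated. We would first attempt to mirror the proof of Theorem 1.1 of \cite{bukh2018rational} line by line.
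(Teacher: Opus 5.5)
Your strategy is the one the paper relies on. The paper gives no argument beyond asserting that the Bukh--Conlon random algebraic proof, together with the inequality $e(H')\geq (v(H')-|R|)\rho_r(H)$ for arbitrary unions of rooted copies, carries over to balanced rooted bipartite graphs. Your instantiation of the construction, however, has a genuine gap.

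With a single polynomial $f:\mathbb F_q^b\times\mathbb F_q^b\to\mathbb F_q$, the edge probability is $q^{-1}$ and $n\approx q^b$. So $e(G)\approx n^{2-1/b}$, and a fixed root tuple has about $q^{bs-e}$ extensions in expectation. Both are correct only when $\rho_r(H)=e/s$ equals the integer $b$. This fails exactly in the paper's application, where $\rho_r(F_{k,1})=(2k-1)/(k-1)$ for $k\geq 3$.

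Your repair is to take $b>\rho_r(H)$ and pass to a random induced subgraph, and this breaks Step 2.
\begin{itemize}
\item The bounded-or-$\geq q/2$ dichotomy concerns the point count $X_{\bar r}$ of the full extension variety over $\mathbb F_q^b$. Its $k$-th moment is now at least $q^{k(bs-e)}\to\infty$, so Markov no longer makes $\{X_{\bar r}\geq q/2\}$ rare.
\item The count $X'_{\bar r}$ inside the random subset does have $O(1)$ moments, but it obeys no dichotomy. Markov then gives only $\Pr[X'_{\bar r}\geq t]=O(t^{-k})$, a constant. That cannot be paid for in the deletion step, which needs the failure probability to beat $n^{|R|}$ root tuples each costing up to $n$ edges.
\end{itemize}
The missing ingredient is to use $a$ independent random polynomials, $f:\mathbb F_q^b\times\mathbb F_q^b\to\mathbb F_q^a$, joining $x$ and $y$ when all coordinates of $f(x,y)$ vanish, with $b/a=\rho_r(H)$ (e.g.\ $b=e$, $a=s$). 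Then the edge probability is $q^{-a}$ and $e(G)=\Theta(n^{2-s/e})$. Each configuration $H'$ in the $k$-th moment contributes at most $q^{b(v(H')-|R|)-a\,e(H')}\leq 1$ by the density inequality.

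Your treatment of degenerate extensions is also unjustified. Balancedness does not control homomorphic images in which unrooted vertices are identified, because merging edges can push the density below $\rho_r(H)$. For example, take roots $r_1,r_2$, unrooted vertices $u_1,u_2,w$, and edges $r_iu_j$ and $wu_j$ for $i,j\in\{1,2\}$. This rooted graph is balanced with $\rho_r=2$, so $b=2a$. Identifying $u_1$ with $u_2$ leaves $3$ edges on $2$ unrooted vertices, and a fixed root pair then has about $q^{2b-3a}=q^{a}\to\infty$ such degenerate extensions in expectation.

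So $X_{\bar r}$ must count only injective extensions, namely $|W_{\bar r}(\mathbb F_q)\setminus Z(\mathbb F_q)|$, where $Z$ is the union of the diagonals $\{y_u=y_v\}$. You then need the Bukh--Conlon form of the dichotomy for differences $W\setminus Z$ of bounded-complexity varieties. With that, every $k$-tuple counted in $\mathbb E[X_{\bar r}^k]$ is a union of genuine rooted copies, and the density inequality applies.

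A minor point: independence of the values at the up to $k\,e$ distinct pairs in such a union requires the degree bound to be at least about $k\,e$, not $D\approx k$. With these corrections, your Step 3 with $t_0=C+1$ goes through.
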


Next we consider a family of graphs $F_{k,t}$ defined as follows. For any $k\geq 2$, let $P_{k-1}=v_1v_2\cdots v_{k-1}$ be a path with $k-1$ vertices. 
Consider the root set $R=\{r_1,r_2,r_3,r_4\}$, where the four root vertices are also re-named as follows, 
$r_1=v_0,r_2=v_k, r_3=x$, and $r_4=y$. Then we join new edges $v_0v_1$, $v_{k-1}v_k$, $xv_{2j}$ for all $1\leq j \leq \lfloor (k-1)/2 \rfloor$, and $yv_{2j-1}$ for all $1\leq j\leq \lfloor k/2 \rfloor$. 
Denote by $F_{k,1}$ the above graph, then we define the $R$-rooted $t$-blow up of the above graph as $F_{k,t}$. 

\begin{prop}\label{balanced} 
For any $k\geq 2$, the rooted graph $F_{k,1}$ is balanced.
\end{prop}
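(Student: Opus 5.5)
The plan is to compute the unrooted density of $F_{k,1}$ exactly and then bound the local density of an arbitrary nonempty $S\subseteq V(F_{k,1})\setminus R$ from below by a simple degree-counting argument.

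\textbf{Step 1: the global density.} The unrooted vertices are $v_1,\dots,v_{k-1}$, so $s=k-1$. Let me count the edges.
\begin{itemize}
\item The path $v_0v_1\cdots v_k$ contributes $k$ edges, namely $k-2$ inside $P_{k-1}$ plus $v_0v_1$ and $v_{k-1}v_k$.
\item The root $x$ contributes $\lfloor (k-1)/2\rfloor$ edges and $y$ contributes $\lfloor k/2\rfloor$ edges. These two numbers sum to $k-1$.
\end{itemize}
Hence $e(F_{k,1})=2k-1$ and
\[
\rho_r(F_{k,1})=\frac{2k-1}{k-1}=2+\frac{1}{k-1}.
\]
This matches the exponent $2-\frac{k-1}{2k-1}=\frac{3k-1}{2k-1}$ in Theorem~\ref{new_exponents}, which is a useful sanity check.

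\textbf{Step 2: every unrooted vertex has degree exactly $3$.} Each $v_i$ with $1\le i\le k-1$ has its two path neighbours $v_{i-1}$ and $v_{i+1}$; at the ends these are the roots $v_0$ and $v_k$. In addition, $v_i$ has exactly one of $x,y$ as a neighbour: $y$ when $i$ is odd, since $i=2j-1\le k-1$ means $j\le\lfloor k/2\rfloor$, and $x$ when $i$ is even, since $i=2j\le k-1$ means $j\le\lfloor (k-1)/2\rfloor$. I will verify these parity ranges carefully, because the whole argument rests on them.

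\textbf{Step 3: local density.} Fix a nonempty $S\subseteq\{v_1,\dots,v_{k-1}\}$. Recall that $e(S)$ counts edges meeting $S$; it equals the degree sum over $S$ minus the number of edges with both ends in $S$, since those are counted twice. Edges with both ends in $S$ can only be path edges, because the roots are not in $S$. So $S$ induces a linear forest with some number $c\ge1$ of components, and it has $|S|-c$ internal edges. Therefore
\[
e(S)=3|S|-(|S|-c)=2|S|+c,
\qquad
\rho_r(S)=2+\frac{c}{|S|}\ge 2+\frac{1}{k-1}=\rho_r(F_{k,1}),
\]
using $c\ge1$ and $|S|\le k-1$. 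Equality holds exactly when $S$ is the whole path. This proves that $F_{k,1}$ is balanced.

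There is no serious obstacle here. The only points needing care are the parity bookkeeping in Step 2, including the small case $k=2$ (where the only unrooted vertex is $v_1$, adjacent to $v_0$, $v_2$ and $y$), and applying the convention that $e(S)$ counts edges meeting $S$ correctly.
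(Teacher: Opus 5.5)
Your proof is correct and matches the paper's argument: you count $2k-1$ edges on $k-1$ unrooted vertices, note that every unrooted vertex has degree $3$, and subtract the at most $|S|-1$ path edges counted twice to obtain $\rho_r(S)\ge 2+\frac{1}{k-1}$. Your exact count $e(S)=2|S|+c$ is a slightly sharper form of the paper's bound $e(S)\ge 2s+1$, and it also identifies when equality holds.
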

\begin{proof} Observe that 
the unrooted density of $F_{k,1}$ is $\rho_r=\frac{2k-1}{k-1}$, since $F_{k,1}$ 
has $k-1$ unrooted vertices and $2k-1$ edges. In fact, the edge number can be counted as follows.
For each of the unrooted vertices $v$, it has exactly one incident edge joining either $x$ or $y$, and this gives a total of $k-1$ edges. 
The path $P_{k-1}$, together with edges $v_0v_1$ and $v_{k-1}v_k$, contributes $k$ edges. So the total edge number is $k-1+k=2k-1$.

Now we consider any $S\subset F_{k,1}\backslash R$, with its size $|S|=s\leq k-1$. The set $S$ is seen as 
a vertex subset of the path $P_{k-1}$.
Then we want to compute the local unrooted density $\rho_r(S)$. We note that 
each vertex in $S$ has degree exactly $3$, which gives a total count $3s$. On the other hand, the choice of the subset $S$ of the path $P_{k-1}$ might disconnect the path $P_{k-1}$.
So in the above total count, the involved edge in the path $P_{k-1}$ 
might be counted once or twice, depending on whether or not both of its endpoints belong to $S$, and there are in total at most $s-1$ edges counted twice. 
Since $s\leq k-1$, we conclude that the total number of edges which are incident to $S$ is at least $3s -(s-1) = 2s+1$. So the local unrooted density $\rho_r(S)\geq \frac{2s+1}{s}\geq \frac{2k-1}{k-1}=\rho_r(F_{k,1})$. This shows that the rooted graph $F_{k,1}$ is balanced.
\end{proof}

\begin{lemma}
	The graph $F_{k,t}$ is a subgraph of 
	the graph $E^+_{k,t}$ defined in the introduction. 
	\end{lemma}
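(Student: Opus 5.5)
We will prove the lemma by writing down an explicit injective map $\phi\colon V(F_{k,t})\to V(E^+_{k,t})$ that sends edges to edges. Recall that $F_{k,t}$ consists of the four roots $v_0,v_k,x,y$ together with $t$ vertex-disjoint copies $P^{(s)}=v^{(s)}_1\cdots v^{(s)}_{k-1}$ of the path $P_{k-1}$, for $s=1,\dots,t$. Its edges are $v_0v^{(s)}_1$, $v^{(s)}_{k-1}v_k$, the path edges $v^{(s)}_iv^{(s)}_{i+1}$, $xv^{(s)}_{2j}$ and $yv^{(s)}_{2j-1}$.

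On the other side, write the theta graph $\theta_{k,t}\subseteq E^+_{k,t}$ as two endpoints $a,b$ joined by $t$ internally disjoint paths $a\,u^{(s)}_1u^{(s)}_2\cdots u^{(s)}_{k-1}\,b$ of length $k$. Let $A\cup B$ be its bipartition, with $a\in B$. Then $u^{(s)}_i\in A$ exactly when $i$ is odd, and $b\in A$ exactly when $k$ is odd.

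Define $\phi(v_0)=a$, $\phi(v_k)=b$, $\phi(v^{(s)}_i)=u^{(s)}_i$, and map the roots $x$ and $y$ to the two added vertices of $E^+_{k,t}$, choosing which goes where according to the parity convention below. This map is clearly injective, since the internal path vertices are distinct and the four roots go to four distinct vertices.

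The edges of $F_{k,t}$ not involving $x$ or $y$ are sent to edges of $\theta_{k,t}$ directly by construction. The edges through $x$ and $y$ are settled by parity. In $F_{k,t}$, $x$ is joined only to even-indexed internal vertices $v^{(s)}_{2j}$, and $y$ only to odd-indexed ones $v^{(s)}_{2j-1}$. In $\theta_{k,t}$, the even-indexed $u^{(s)}_{2j}$ all lie in $B$ and the odd-indexed $u^{(s)}_{2j-1}$ all lie in $A$. In $E^+_{k,t}$, one added vertex is joined to every vertex of $A$ and the other to every vertex of $B$. So we send $y$ to the added vertex adjacent to all of $A$ and $x$ to the one adjacent to all of $B$, the opposite labeling from the introduction. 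The swap is harmless because $E^+_{k,t}$ is symmetric under exchanging the roles of $A,x$ and $B,y$. With this choice every edge at $x$ or $y$ is preserved. The index ranges $1\le j\le\lfloor (k-1)/2\rfloor$ and $1\le j\le\lfloor k/2\rfloor$ cover exactly the even and the odd indices in $\{1,\dots,k-1\}$, so nothing is missed.

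The only point needing care is the parity bookkeeping, namely confirming that $F_{k,1}$ uses neither the edges from the added vertices to $a,b$ nor the edge $xy$. It does not, since $F_{k,1}$ has no edges between roots and roots $v_0,v_k$ attach only to $v_1,v_{k-1}$. Hence $F_{k,t}$ embeds as a spanning subgraph of $E_{k,t}$ minus some edges, and in particular $F_{k,t}\subseteq E_{k,t}\subseteq E^+_{k,t}$. Combined with Proposition~\ref{balanced} and Theorem~\ref{rooted-lower-bound}, this gives the lower bound $\Omega(n^{2-(k-1)/(2k-1)})=\Omega(n^{(3k-1)/(2k-1)})$.
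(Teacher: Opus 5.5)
Your proof is correct and follows the same route as the paper. Both identify $v_0,v_k$ with the endpoints of $\theta_{k,t}$ and the path copies with its internal paths, then use parity to send $x$ and $y$ to the added vertices adjacent to the classes containing the even- and odd-indexed internal vertices, so that $F_{k,t}$ is $E_{k,t}$ minus the two edges from the added vertices to the endpoints. Your version just makes the paper's brief ``compare definitions'' argument explicit; the $x/y$ relabeling is harmless, and in fact unnecessary, since an embedding need not preserve vertex names.
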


\begin{proof}In fact $F_{k,t}$ is contained in $E_{k,t}$, which is a subgraph of $E_{k,t}^+$. The proof follows by comparing definitions. 
Observe that the graph $E_{k,t}$ is exactly the graph $F_{k,t}$ with two additional edges. In the case that $k$ is odd, the two new edges are $xv_0$ and $yv_k$. In the case that $k$ is even, the two new edges are $xv_0$ and $xv_k$.
	\end{proof}

	\begin{proof}[Proof of Theorem~\ref{new_exponents}] Recall that for any positive integers $k$ and $t$, 
	the graph $F_{k,t}$ is the $R$-rooted $t$-blow-up of the graph $F_{k,1}$ with $\rho_r=\frac{2k-1}{k-1}$.
	We have also checked in Lemma~\ref{balanced} that $F_{k,1}$ is balanced. 
	So, applying Theorem~\ref{rooted-lower-bound}, for sufficiently large $t$, 
	 we have
		\begin{equation}
			\text{ex}(n,E^+_{k,t})\geq \text{ex}(n,F_{k,t})=
			\Omega(n^{\frac{3k-1}{2k-1}}).
		\end{equation}
		Combining with Proposition~\ref{upper_generalized_cube},  the conclusion follows. 
	\end{proof}

\section{Final Remark}

We now turn our attention to graphs whose minimum FVN is larger than $1$. 
Consider the theta graph $\theta_{3,t}$, viewed as a bipartite graph with vertex bipartition $A\cup B$. For each vertex in $A$, attach a path of length $2$ that ends at a common vertex $x$. Let $L_t$ denote the resulting graph. It was shown in~\cite{jiang2022turan} that, for sufficiently large $t$, $$\text{ex}(n,L_t)=\Theta(n^{7/5}).$$ Note that this graph has girth $6$. This example shows that when the minimum FVN of a bipartite graph is 2, the Turán exponent is no longer determined solely by the minimum cycle length. Consequently, it remains unclear which additional structural properties of a graph--beyond its girth--play a role in determining its Turán exponent.

This question is closely related to the well-known {\it Rational Exponent Conjecture}, usually attributed to Erd\H{o}s and Simonovits (see~\cite{erdHos1981combinatorial}). The conjecture asks whether every rational number $r\in[1,2]$ can arise as the Turán exponent of some graph $H$; that is, whether for each such $r$ there exists a graph $H$ satisfying $\text{ex}(n,H)=\Theta(n^r)$.
When this holds, we say that the rational number $r$ is realizable as a Tur\'an exponent.

The Rational Exponent Conjecture has seen substantial progress in recent years. In a breakthrough work, Bukh and Conlon~\cite{bukh2018rational} proved a family version of the conjecture. Since then, many new families of rational exponents have been realized.
 We refer the reader to ~\cite{jiang2022turan, kang2021rational, conlon2021more, janzer2020extremal, jiang2023many, conlon2022rational} and others for further developments in this direction. 

Motivated by the Rational Exponent Conjecture and by our main theorem, 
we propose the following conjecture concerning the (non-)realizability of Tur\'an exponents by graphs with bounded minimum FVN.

\begin{conj}\label{feedback-exponent} [The Feedback-Exponent Conjecture]\label{conj}
    Let $k_0\geq 1$ be an integer. Then there exists a rational number 
$p/q\in[1,2]$ that cannot be realized as the Turán exponent of any bipartite graph whose minimum FVN is at most $k_0$.
\end{conj}

The case $k_0=1$ of Conjecture~\ref{feedback-exponent} follows from Theorem~\ref{main}. Indeed,
let $H$ be a connected bipartite graph with minimum FVN one. If $H$
has girth four, then $C_4\subseteq H$, and hence
\[
\operatorname{ex}(n,H)
\geq \operatorname{ex}(n,C_4)
=\Omega(n^{3/2}).
\]
Together with Theorem~1.1, this gives
\[
\operatorname{ex}(n,H)=\Theta(n^{3/2}).
\]
If $H$ has girth at least six, then $k^\ast\geq 3$, and Theorem~1.1
gives
\[
\operatorname{ex}(n,H)=O(n^{4/3}).
\]
Thus no exponent in
\[
\left(\frac{4}{3},\frac{3}{2}\right)
\cup
\left(\frac{3}{2},2\right)
\]
can be realized by a connected bipartite graph with minimum FVN one.

The disconnected case follows as well. Since the feedback vertex number
is additive over connected components, such a graph has at most one
cyclic component, while all remaining components are forests. Fixed
forest components affect the extremal number by at most a linear term,
and hence do not change any Turan exponent larger than one.

As a next step, resolving Conjecture~\ref{feedback-exponent} for $k_0=2$ would provide
valuable insight into the structural features, beyond girth alone, that govern
the Turán exponent.

Finally, we also suspect that when attention is restricted to graphs with minimum FVN equal to $1$ or $2$, 
the set of realizable exponents has no nontrivial accumulation points other than $1$.

\appendix
\section{Bouquets of $8$-cycles}
For an integer $t\geq 1$, let $W_t$ be
the graph obtained by taking $t$
vertex-disjoint copies of $C_8$ and identifying one vertex from each cycle into a single common vertex.
Thus, any two of the $t$
cycles intersect exactly at this common vertex.

\begin{prop}
\label{prop:C8_bouquet}
For every fixed integer $t\geq 1$,
\[
\operatorname{ex}(n,W_t)
=
\Theta_t\bigl(\operatorname{ex}(n,C_8)\bigr).
\]
More precisely,
\[
\operatorname{ex}(n,C_8)
\leq
\operatorname{ex}(n,W_t)
\leq
2^{14(t-1)+2}\operatorname{ex}(n,C_8)
+
7(t-1)n.
\]
\end{prop}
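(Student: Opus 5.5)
The lower bound is immediate. $C_8 \subseteq W_t$, so every $C_8$-free graph is $W_t$-free, and hence $\operatorname{ex}(n,C_8)\leq \operatorname{ex}(n,W_t)$.

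For the upper bound I will argue by induction on $t$. The case $t=1$ is trivial, since $W_1=C_8$. Assume the bound holds for $W_{t-1}$. Let $G$ be an $n$-vertex $W_t$-free graph with
\[
e(G) > 2^{14(t-1)+2}\operatorname{ex}(n,C_8)+7(t-1)n,
\]
and aim for a contradiction.

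First I clean $G$ to a subgraph of large minimum degree. Repeatedly deleting vertices of degree at most $7$ removes at most $7n$ edges. The additive term $7(t-1)n$ is sized so that this can be afforded once per induction step. The result is a subgraph $G_1$ with minimum degree at least $8$ and
\[
e(G_1) > 2^{14(t-1)+2}\operatorname{ex}(n,C_8)+7(t-2)n.
\]

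Next I split off a sparse random piece. Take a random subset $X\subseteq V(G_1)$ with each vertex included with probability $1/2$. Alternatively, take a random colouring into $2^{14}$ classes, which is what the factor $2^{14}$ per step suggests. The goal is to extract two edge-disjoint structures:
\begin{itemize}
\item a subgraph $G_2$ with $e(G_2)\geq 2^{-14}e(G_1)-7n$, to which the induction hypothesis applies and which therefore contains a copy of $W_{t-1}$ centred at some vertex $c$;
\item a copy of $C_8$ through $c$ that is vertex-disjoint from that $W_{t-1}$ apart from $c$.
\end{itemize}
The factor $2^{14}$ fits the following picture. The $C_8$ has $7$ non-centre vertices and $W_{t-1}$ has its non-centre vertices; assigning each vertex one of two roles in a random $2$-colouring and requiring a specified pattern on $14$ relevant vertices costs $2^{-14}$.

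A cleaner route would be to strengthen the induction to a rooted statement. The rooted claim is: if $e(G)$ exceeds the bound, then every vertex in a dense core is the centre of a $W_t$. That is hard to guarantee, since $C_8$ need not pass through a prescribed vertex. So I will instead use the following partition trick. Randomly partition $V=V_1\cup V_2$ and keep only the edges inside $V_1$ together with the edges from $V_1$ to $V_2$; this retains about $3/4$ of the edges. Then apply the $C_8$-extremal bound to find many $8$-cycles that use exactly one vertex of $V_1$ as the centre and have their other $7$ vertices in $V_2$. A cycle with this single-vertex pattern, with the $14$ vertices of the two pieces coloured correctly, occurs with probability $2^{-14}$ in an appropriate averaging.

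The main obstacle is this disjointness-with-a-common-vertex step. Getting one $C_8$ is easy; forcing $t$ cycles to share exactly one vertex while being otherwise disjoint is the real difficulty. Concretely, I need a dense subgraph in which every vertex lies on a $C_8$ avoiding any prescribed set of $7(t-1)$ vertices. I would try the following. Let $D$ be the set of vertices $c$ such that, after deleting any $7(t-1)$ other vertices, $c$ still lies on an $8$-cycle. If $D$ is empty, every vertex $c$ has a small blocking set $S_c$. Deleting the edges from $c$ to $S_c$, or charging $7(t-1)$ edges to $c$, destroys every $8$-cycle through $c$ at linear cost, and this is where the $7(t-1)n$ term is spent. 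The remaining graph is then $C_8$-free, which forces $e(G)\leq \operatorname{ex}(n,C_8)+7(t-1)n$, a contradiction. If $D$ is large, I greedily build the $t$ petals at a vertex of $D$. The exact constants will be adjusted so that they match the stated bound.
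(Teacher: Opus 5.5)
Your proposal does not yet contain a working argument.

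\textbf{The first sketches do not work.} The induction on $t$ and the partition trick are not viable as stated. If you keep only the edges inside $V_1$ and between $V_1$ and $V_2$, no $8$-cycle can have seven vertices in $V_2$, since such a cycle would need edges inside $V_2$. The induction hypothesis also gives you a $W_{t-1}$ centred at an uncontrolled vertex $c$, with no mechanism for attaching a further disjoint petal at $c$.

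\textbf{Where the blocking-set idea breaks.} Your last paragraph does identify the right object. In a $W_t$-free $G$, every vertex $c$ has a blocking set $S_c$ with $|S_c|\le 7(t-1)$ meeting every $C_8$ through $c$. A single vertex of $D$ already lets you build $W_t$ greedily, so in fact $D=\varnothing$. The step that uses $S_c$ fails, however. Deleting the edges from $c$ to $S_c$, or charging $7(t-1)$ edges to $c$, does not destroy the $8$-cycles through $c$. A blocking vertex $x\in S_c$ need not be adjacent to $c$; it can sit at distance $2$, $3$ or $4$ from $c$ along the cycle. A small vertex-hitting set for the cycles through each vertex cannot be turned into a linear-size edge deletion that makes the whole graph $C_8$-free. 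So your conclusion $e(G)\le \operatorname{ex}(n,C_8)+7(t-1)n$, with leading constant $1$, is unsupported.

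\textbf{The missing idea.} You need a random vertex labelling that converts the local blocking sets into a global $C_8$-free subgraph. Give each vertex an independent fair bit $\xi_x$, and set $U=\{v:\ \xi_v=1 \text{ and } \xi_x=0 \text{ for all } x\in S_v\}$.

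Suppose an $8$-cycle $C$ lay in $G[U]$, and pick any $v\in V(C)$. Some $x\in S_v\cap V(C)$ exists. Then $v\in U$ forces $\xi_x=0$, while $x\in U$ forces $\xi_x=1$, a contradiction. So $G[U]$ is $C_8$-free.

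Now call an edge $uv$ exceptional if $u\in S_v$ or $v\in S_u$. A non-exceptional edge survives in $G[U]$ with probability at least $2^{-2-|S_u|-|S_v|}\ge 2^{-14(t-1)-2}$. There are at most $\sum_v |S_v|\le 7(t-1)n$ exceptional edges.

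This single-shot argument is the source of both constants in the statement. The factor $2^{14(t-1)+2}$ counts the $2+14(t-1)$ labels that must be set correctly. The additive $7(t-1)n$ counts the exceptional edges. Neither comes from a per-step cost in an induction or from minimum-degree cleaning.
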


\begin{proof}
Since $C_8$ is a subgraph of $W_t$, every $C_8$-free graph is also
$W_t$-free. Therefore,
\[
\operatorname{ex}(n,C_8)
\leq
\operatorname{ex}(n,W_t).
\]

We prove the second inequality. Let $G$ be a $W_t$-free graph on
$n$ vertices, and write
\(
m:=e(G).
\)

For each vertex $v\in V(G)$, choose a maximal family
$\mathcal C_v$ of copies of $C_8$ containing $v$ such that any two
distinct members of $\mathcal C_v$ intersect only at $v$. Since
$G$ is $W_t$-free,
\(
|\mathcal C_v|\leq t-1.
\)

Define
\[
S_v
:=
\bigcup_{C\in\mathcal C_v}
\bigl(V(C)\setminus\{v\}\bigr).
\]
The sets $V(C)\setminus\{v\}$, for $C\in\mathcal C_v$, are pairwise
disjoint and each has size $7$. Hence
\begin{equation}
\label{eq:hitting_set_size}
|S_v|
=
7|\mathcal C_v|
\leq
7(t-1).
\end{equation}

Moreover, every copy of $C_8$ containing $v$ also contains a vertex
of $S_v$. Indeed, otherwise it would intersect every member of
$\mathcal C_v$ only at $v$, and hence it could be added to
$\mathcal C_v$, contradicting the maximality of $\mathcal C_v$.

Independently assign to each vertex $x\in V(G)$ a random variable
$\xi_x\in\{0,1\}$ such that
\[
\mathbb P(\xi_x=0)
=
\mathbb P(\xi_x=1)
=
\frac12.
\]
Define the random set
\[
U
:=
\left\{
v\in V(G):
\xi_v=1
\text{ and }
\xi_x=0
\text{ for every }x\in S_v
\right\}.
\]

We claim that $G[U]$ is $C_8$-free. Suppose, to the contrary, that
$C$ is a copy of $C_8$ contained in $G[U]$. Choose any vertex
$v\in V(C)$. Since $S_v$ intersects every copy of $C_8$ containing
$v$, there exists
\[
x\in S_v\cap\bigl(V(C)\setminus\{v\}\bigr).
\]
Since $v\in U$, the definition of $U$ gives $\xi_x=0$. On the other
hand, $x\in U$ gives $\xi_x=1$, a contradiction. Hence $G[U]$ is
$C_8$-free.

Call an edge $uv\in E(G)$ \emph{exceptional} if
\(
u\in S_v\text{ or }
v\in S_u.
\)
By \eqref{eq:hitting_set_size}, the number of exceptional edges is
at most
\[
\sum_{v\in V(G)}|S_v|
\leq
7(t-1)n.
\]

Now let $uv$ be a non-exceptional edge. Then
\(
u\notin S_v\text{ and }
v\notin S_u.
\)
Thus, the conditions defining the events $u\in U$ and $v\in U$ do
not conflict. Using \eqref{eq:hitting_set_size}, we obtain
\[
\begin{aligned}
\mathbb P(u,v\in U)
&=
2^{-\left|\{u,v\}\cup S_u\cup S_v\right|}\\
&\geq
2^{-2-|S_u|-|S_v|}\\
&\geq
2^{-14(t-1)-2}.
\end{aligned}
\]
Therefore,
\[
\mathbb E\bigl[e(G[U])\bigr]
\geq
2^{-14(t-1)-2}
\bigl(m-7(t-1)n\bigr).
\]
Hence there exists a choice of $U$ such that
\[
e(G[U])
\geq
2^{-14(t-1)-2}
\bigl(m-7(t-1)n\bigr).
\]
Since $G[U]$ is $C_8$-free,
\[
e(G[U])
\leq
\operatorname{ex}(n,C_8).
\]
It follows that
\[
m
\leq
2^{14(t-1)+2}\operatorname{ex}(n,C_8)
+
7(t-1)n.
\]
Taking the maximum over all $W_t$-free graphs $G$ proves the stated
upper bound.

Finally, since
\[
\operatorname{ex}(n,C_8)\geq n-1,
\]
the additive linear term can be absorbed into
$\operatorname{ex}(n,C_8)$, with a constant depending only on $t$.
Therefore,
\[
\operatorname{ex}(n,W_t)
=
\Theta_t\bigl(\operatorname{ex}(n,C_8)\bigr).
\]
\end{proof}

Let $\Theta_{1,7,7}$ denote the generalized theta graph consisting of
two vertices joined by three internally vertex-disjoint paths of
lengths $1$, $7$, and $7$. Equivalently, $\Theta_{1,7,7}$ is obtained
from two disjoint copies of $C_8$ by identifying one edge of the first copy
with one edge of the second copy.

\begin{prop}\label{theta177}
For every positive integer \(n\),
\[
\operatorname{ex}(n,C_8)
\leq
\operatorname{ex}(n,\Theta_{1,7,7})
\leq
2^8\operatorname{ex}(n,C_8).
\]
Consequently,
\[
\operatorname{ex}(n,\Theta_{1,7,7})
=
\Theta\bigl(\operatorname{ex}(n,C_8)\bigr).
\]
\end{prop}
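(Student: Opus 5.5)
The lower bound is immediate, since $C_8\subseteq\Theta_{1,7,7}$. For the upper bound I will mimic the random-sampling argument of Proposition~\ref{prop:C8_bouquet}, replacing the hitting set $S_v$ by a hitting set attached to each edge. The constant $2^8$ suggests the following bookkeeping: each relevant event involves at most $8$ fair coins. So for each edge I aim for a hitting set of size $6$ on the remaining vertices of one $C_8$, plus the two endpoints.

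Let $G$ be $\Theta_{1,7,7}$-free with $m$ edges. For each edge $e=uv$ lying in some copy of $C_8$, fix one such copy $C_e$ and set $S_e:=V(C_e)\setminus\{u,v\}$, so $|S_e|=6$. The key structural claim is that every copy $C$ of $C_8$ through $e$ meets $S_e$. Suppose instead that $C$ and $C_e$ meet only in $\{u,v\}$. Then the path $C-e$ of length $7$ and the path $C_e-e$ of length $7$ are internally disjoint $u$--$v$ paths. Together with the edge $uv$ they form a copy of $\Theta_{1,7,7}$, which is a contradiction.

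Next I would orient each edge by choosing a designated endpoint, and define a random set $U$ using independent fair coins $\xi_x$. I want $G[U]$ to be $C_8$-free while $\mathbb E[e(G[U])]\ge 2^{-8}m$. The difficulty, and the step I expect to be the main obstacle, is that a single vertex selection rule must encode edge-based hitting sets. The vertex-based trick from Proposition~\ref{prop:C8_bouquet} does not directly give $2^{-8}$, because a vertex lies on many edges.

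My first attempt is to keep edges rather than vertices. Let $G'$ be the random spanning subgraph consisting of the edges $e$ with $\xi_u=\xi_v=1$ and $\xi_x=0$ for all $x\in S_e$; edges in no $C_8$ are kept whenever $\xi_u=\xi_v=1$.

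\begin{itemize}
\item \emph{$G'$ is $C_8$-free.} Suppose $C\subseteq G'$ is a copy of $C_8$ and pick any edge $e$ of $C$. Then $C$ contains some $x\in S_e$. Because the edges of $C$ at $x$ are kept, $\xi_x=1$. But keeping $e$ forces $\xi_x=0$, a contradiction.
\item \emph{Each edge survives with probability at least $2^{-8}$.} Keeping $e$ is determined by exactly $8$ distinct coins, namely those on $\{u,v\}\cup S_e$, so it happens with probability exactly $2^{-8}$ (or $2^{-2}$ if $e$ lies in no $C_8$).
\end{itemize}

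Hence $\mathbb E[e(G')]\ge 2^{-8}m$. Since $G'$ is $C_8$-free, $e(G')\le\operatorname{ex}(n,C_8)$, and therefore $m\le 2^8\operatorname{ex}(n,C_8)$. This route avoids any exceptional-edge term, because $S_e$ excludes the endpoints by construction. I expect that only the hitting claim needs real care, namely that the two $7$-paths are genuinely internally disjoint.
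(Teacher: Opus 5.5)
Your proof is correct and is essentially the paper's argument: the same edge-indexed hitting sets $S_e=V(C_e)\setminus\{u,v\}$ (justified by the same $\Theta_{1,7,7}$ contradiction), the same random rule keeping $e=uv$ when $\xi_u=\xi_v=1$ and $\xi_x=0$ on $S_e$, and the same $2^{-8}$ survival bound. The orientation and random-vertex-set idea in your third paragraph is never used and can simply be deleted.
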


\begin{proof}
The lower bound follows from \(C_8\subseteq\Theta_{1,7,7}\).

For the upper bound, let \(G\) be an \(n\)-vertex
\(\Theta_{1,7,7}\)-free graph. For each edge \(e=uv\), if there is a
copy \(C_e\) of \(C_8\) containing \(e\), choose one and set
\[
S_e:=V(C_e)\setminus\{u,v\};
\]
otherwise, set \(S_e=\varnothing\). Thus \(|S_e|\leq 6\). Moreover,
every copy of \(C_8\) containing \(e\) meets \(S_e\), since otherwise
it and \(C_e\) would intersect exactly in \(e\), and their union would
form a copy of \(\Theta_{1,7,7}\).

Independently assign to each vertex \(x\in V(G)\) a random label
\(\xi_x\in\{0,1\}\), with both values having probability \(1/2\).
Define a random subgraph $G'$ as follows. Keep an edge \(e=uv\) in \(G'\) if
\[
\xi_u=\xi_v=1
\qquad\text{and}\qquad
\xi_x=0\quad\text{for every }x\in S_e.
\]

The resulting spanning subgraph \(G'\) is \(C_8\)-free. Indeed, if a
copy \(C\) of \(C_8\) survived, then for any edge \(e\in E(C)\), some
vertex \(x\in V(C)\cap S_e\) would satisfy \(\xi_x=0\), whereas the
survival of an edge of \(C\) incident with \(x\) would imply
\(\xi_x=1\), a contradiction.

Each edge is kept in $G'$ with probability at least
\[
2^{-2-|S_e|}\geq 2^{-8}.
\]
Hence, by linearity of expectation,
\[
\mathbb{E}[e(G')]\geq 2^{-8}e(G).
\]
Therefore, there exists a realization of $G'$ such that
\[
e(G')\geq 2^{-8}e(G).
\]
Since this realization of $G'$ is $C_8$-free, we have
\[
2^{-8}e(G)
\leq e(G')
\leq \operatorname{ex}(n,C_8).
\]
Consequently,
\[
e(G)\leq 2^8\operatorname{ex}(n,C_8).
\]
\end{proof}

\begin{Remark}
More generally, for every fixed \(t\geq 2\),
\[
\operatorname{ex}\!\left(
n,\Theta_{1,\underbrace{7,\ldots,7}_{t\text{ times}}}
\right)
=
\Theta_t\bigl(\operatorname{ex}(n,C_8)\bigr),
\]
where the implicit constants depend only on \(t\). The proof is a
straightforward variant of the proof of Proposition~\ref{theta177}.
\end{Remark}

\bibliographystyle{plain}
\addcontentsline{toc}{chapter}{Bibliography}
\bibliography{forest}
\end{document}